\newtheorem{theorem}{Theorem}[section]
\newtheorem{lemma}[theorem]{Lemma}
\newtheorem{proposition}[theorem]{Proposition}
\newtheorem{definition}[theorem]{Definition}
\newtheorem*{rmk*}{Remark}
\newtheorem{corollary}[theorem]{Corollary}
\newtheorem{claim}[theorem]{Claim}
\theoremstyle{definition}
\newcommand{\To}{\mathbf{T}}
\newenvironment{enumerate*}%
  {\begin{enumerate}[(I)]%
    \setlength{\itemsep}{10pt}%
    \setlength{\parskip}{0pt}}%
  {\end{enumerate}}
\ifodd\value{page}
  {\small BENJAMIN BEDERT}
\title{Polynomial bounds for the Chowla Cosine Problem}
\author{Benjamin Bedert}
\thanks{bedert.benjamin@gmail.com\\The author gratefully acknowledges financial support from the EPSRC}
\begin{document}
\begin{abstract}
Let $A\subset \mathbf{N}$ be a finite set of $n=|A|$ positive integers, and consider the cosine sum $f_A(x)=\sum_{a\in A}\cos ax$. We prove that $$\min_x f_A(x)\leqslant -n^{1/5-o(1)},$$ thereby establishing polynomial bounds for the Chowla cosine problem.
\end{abstract}
\maketitle

\tableofcontents
\section{Introduction}
Let $A\subset \mathbf{N}$ be a finite set of $n$ positive integers and consider the cosine polynomial $$f_A(x)=\sum_{a\in A}\cos ax.$$ Since $\int_0^{2\pi}f_A(x)\,dx=0$, $f_A$ assumes both strictly positive and strictly negative values. It is clear that $f_A(0)=n$ and $\lVert f_A\rVert_\infty=n$, so it is trivial to determine the largest positive value that $f_A$ assumes. Determining whether $f_A$ must assume large negative values is a hard problem; Ankeny and Chowla \cite{Chowla1952RiemannZeta}, motivated by questions on zeta functions, asked whether for any $K>0$, there is an $n_0$ such that every set $A$ of size $|A|\geqslant n_0$ satisfies $|\min_x f_A(x)|>K$. In 1965, Chowla \cite{chowla1965} posed the more precise question of finding the largest number $K(n)>0$ such that any such cosine polynomial with $n$ terms assumes a value smaller than or equal to $-K(n)$. Chowla's cosine problem is thus to determine $$K(n)=\inf_{A\subset \mathbf{N}:|A|=n}\left( -\min_{x}f_A(x)\right).$$
There exist simple constructions of sets $A$ of size $n$ for which $\sum_{a\in A}\cos ax\geqslant -10\sqrt{n}$ for all $x$. One can for example take $A=\{b_1-b_2:b_j\in B\}\setminus\{0\}$ where $B$ is a Sidon set\footnote{In this context, a set $B$ is said to be \emph{Sidon} if there are no nontrivial solutions to $x_1-x_2=x_3-x_4$ with $x_i\in B$.} of size $m\approx\sqrt{n}$ (and add up to $O(\sqrt{n})$ arbitrary elements if $n$ is not of the form $m^2-m$), and observe that $f_A=\hat{1}_A=|\hat{1}_B|^2-|B|=|\hat{1}_B|^2-O(\sqrt{n})$. This shows that $K(n)\ll \sqrt{n}$, which is the best known upper bound to date; in fact Chowla \cite{chowla1965} conjectured that this is sharp, namely that $K(n)\asymp \sqrt{n}$. 

\medskip

There has been incremental progress on lower bounds for Chowla's cosine problem. The first bound showing that $K(n)\to\infty$ follows from Cohen's work \cite{cohen} on the Littlewood $L^1$ conjecture, as demonstrated by S. and M. Uchiyama \cite{Uchiyama1960CosineProblem}. This was also observed by Roth \cite{Roth1973CosinePolynomials}, who by different methods obtained the stronger bound $K(n)\gg (\log n)^c$ for $c=1/2-o(1)$. We note that the value of this exponent $c$ was later improved as an immediate consequence of various papers on the $L^1$ conjecture, whose resolution by McGehee, Pigno and Smith \cite{mcgeheepignosmith}, and independently Konyagin \cite{konyaginl} ultimately led to $c=1$. Bourgain \cite{Bourgainchowla2,Bourgainchowla1} was the first to breach the logarithmic barrier, establishing the quasipolynomial bound $K(n)>e^{(\log n)^\varepsilon}$ for some $\varepsilon>0$. A further refinement of Bourgain's method by Ruzsa \cite{ruzsachowla} shows that $K(n)\geqslant e^{c'(\log n)^{1/2}}$, which stood as the previous record. We also mention that, among other results, Sanders \cite{sanders2010chowla} proved a polynomial bound for $|\min_x f_A(x)|$ in the special setting where all elements of $A$ have size $O(n)$. Our main result is the following improvement, producing the first polynomial bounds for the Chowla cosine problem.
\begin{theorem}\label{th:chowlapoly}
Any set $A\subset \mathbf{N}$ of $n=|A|$ positive integers satisfies $$\min_{x\in[0,2\pi]} \sum_{a\in A}\cos ax\leqslant -n^{1/5-o(1)}.$$ 
\end{theorem}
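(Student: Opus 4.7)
The plan is by contradiction: suppose $L := -\min_x f_A(x) < n^{1/7 - o(1)}$. The starting point is to consider $g := f_A + L$, a non-negative trigonometric polynomial on $\mathbf{T}$ whose Fourier coefficients are $\widehat{g}(0) = L$ and $\widehat{g}(\pm a) = \tfrac12$ for $a \in A$; equivalently, $g = \widehat{\mu}$ for the non-negative measure $\mu = L\delta_0 + \tfrac12(\mathbf{1}_A + \mathbf{1}_{-A})$ on $\mathbf{Z}$. Every subsequent step will extract consequences from this soft positivity condition combined with smallness of $L$. The trivial Uchiyama-type bound $\|f_A\|_1 \leq 4\pi L$, when combined with the McGehee--Pigno--Smith--Konyagin lower bound, already gives $L \gg \log n$, so any improvement must exploit the very specific form of $f_A$ as a $\pm 1/2$-coefficient sum on $\pm A$.

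The first and most substantial step is to upgrade the condition $g \geq 0$ into polynomial-strength additive structure on $A$. A single Riesz-product test against a dissociated subset $B \subset A$ of size $m$, via $R(x) = \prod_{b \in B}(1 + \cos(bx))$, gives at best a bound of type $L \gg m$ up to higher-order corrections in which signed sums from $B$ landing inside $A$ can overwhelm the linear term --- this is essentially why Bourgain and Ruzsa obtain only subpolynomial bounds. To break the polynomial barrier, I would iterate a dichotomy: either $A$ contains a refined ``dissociated-with-small-interference'' slice on which a sharpened Riesz-product test controls the higher-order contributions (e.g.\ by restricting attention to a Bohr-set neighbourhood), or $A$ has small iterated sumsets, in which case Plünnecke--Ruzsa and a polynomial Freiman--Ruzsa theorem (in the style of Sanders) place $A$ inside a coset progression of polynomial size. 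The additive energy $E(A)$ and the dimension of the large spectrum of $\mu$ are the invariants to track through this iteration.

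The second step reduces to the setting of Sanders' earlier theorem cited in the introduction, which gives polynomial bounds for $-\min f_A$ when $A \subset [0, O(n)]$. Once $A$ is modelled, via a Freiman $2$-isomorphism, inside a group of size $n^{O(1)}$, the relevant Fourier quantities (moments $\int f_A^k$ and convolution powers $\mu^{*k}$) are preserved up to constants. Transporting Sanders' bound back to $A$ yields $L \gg n^c$ for some exponent $c > 1/7$, contradicting the standing hypothesis.

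The main obstacle is unquestionably the first step: the existing Bourgain--Ruzsa argument incurs a loss of type $\exp(\sqrt{\log n})$ precisely in the iteration that amplifies the Riesz-product bound, and only a genuinely polynomial iteration will cross the barrier. I expect the exponent $1/7$ to arise as an optimization over three parameters: the $L^{2k}$-moment level at which the spectral dichotomy is extracted, the Plünnecke--Ruzsa and Chang losses incurred in passing from spectral structure to sumset bounds, and the polynomial exponent appearing in Sanders' theorem in the dense regime; the $o(1)$ absorbs polylogarithmic factors from the iteration, as is standard.
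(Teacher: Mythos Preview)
Your proposal is not a proof but a strategy outline, and it contains two genuine gaps.

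The first gap is the one you yourself flag: your ``first step'' is precisely the Bourgain--Ruzsa barrier, and you have not supplied a mechanism to make the iteration lose only polynomially. Saying that one needs ``a genuinely polynomial iteration'' is a restatement of the problem, not a solution. The dichotomy you describe (dissociated slice versus small doubling, then polynomial Freiman--Ruzsa) is essentially the shape of the Bourgain--Ruzsa argument already, and nothing in your sketch explains why the losses would now be polynomial rather than $\exp(\sqrt{\log n})$.

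The second gap is in your reduction step. A Freiman $2$-isomorphism preserves additive energy and convolution counts, but it does \emph{not} in general preserve the quantity $-\min_x \hat{1}_A(x)$. The one-sided minimum depends on the full character structure of $\mathbf{Z}$ (all characters at once), not just on the $2$-additive relations in $A$, so transporting Sanders' dense-regime bound back to the original $A$ is not justified as stated.

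By contrast, the paper's argument does not use Riesz products, Freiman--Ruzsa, or Sanders' dense theorem at all. The key new idea is to multiply $\hat{1}_A$ by $1\pm\sin(2\pi t x)$ for a well-chosen $t$: this preserves nonnegativity up to a factor but perturbs the Fourier coefficients on $A\cap(A+t)$ from $1$ to $2\pm i$. Since $|2-i|^3>2^3$, taking a threefold self-convolution amplifies the coefficients on $B_t=(A\cap(A+t))\setminus(A\cap(A-t))$ to modulus $\sqrt{125}$ while the rest stay at $8$; a Parseval-type test against $|\hat{1}_{B_t}|*|\hat{1}_{B_t}|$ then forces $K^{O(1)}\gg |B_t|$. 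Combined with a direct combinatorial lower bound $|B_t|\gg n^{1-o(1)}/K$ (proved via Roth's energy lemma and a progression-counting argument), this yields $K\gg n^{1/7-o(1)}$. The exponent $1/7$ comes from bookkeeping these two explicit inequalities, not from optimising a three-parameter iteration.
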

Hence, $K(n)\gg n^{1/5-o(1)}$. For clarity of exposition, we first prove this result with the weaker (but still polynomial) bound $K(n)\gg n^{1/12}$ using a streamlined version of our argument in Section~\ref{sec:chowla1/12}, which takes up only 5 pages. In the final Section~\ref{sec:exponentvalue}, we show that this may be improved to $K(n)\gg n^{1/5-o(1)}$ with some further effort. It seems plausible that our method can produce an even better exponent than $1/5$, but it remains an interesting open problem to investigate whether the bound $K(n)\asymp n^{1/2}$ is true. 
\par We remark that very recently, Jin, Milojevi{\'c}, Tomon and Zhang \cite{jin2025smallEigenvalues} independently uploaded a preprint that also establishes a polynomial bound $K(n)\gg n^c$
with exponent $c= 1/10-o(1)$. The method in our paper differs significantly from that of Jin et al., whose main theorem is a structural result about graphs with no small (i.e.~large and negative) eigenvalues.

\medskip

We also mention that all other existing methods which yield superlogarithmic bounds for $K(n)$, including those in the work \cite{jin2025smallEigenvalues} of Jin et al., are very sensitive to the cosine polynomials having all their coefficients in $\{0,1\}$. Let $S$ be a subset of $\mathbf{R}\setminus\{0\}$ and write $\mathcal{C}_S(n)$ for the class of cosine polynomials with $n$ terms and coefficients in $S$. Then one can define the analogous quantity
$$K_S(n)=\inf_{f\in\mathcal{C}_S(n)}\left(-\min_{x\in[0,2\pi]}f(x)\right).$$ 
The previous strongest general bound in the literature states that $K_S(n)\gg (\min_{s\in S}|s|)\log n$, which follows from a simple application of the $L^1$ conjecture (as in McGehee-Pigno-Smith \cite{mcgeheepignosmith}). Our methods provide polynomial bounds in the general setting where $S$ is an arbitrary finite set. 
\begin{theorem}\label{th:generalcoefintro}
Let $S\subset\mathbf{R}\setminus\{0\}$ be finite. Then there exist two constants $c_S,c'_S>0$ such that the following holds. For every symmetric set $A\subset\mathbf{Z}\setminus\{0\}$ of size $n=|A|$, and every choice of coefficients $s_a\in S$ satisfying $s_a=s_{-a}$ for all $a\in A$, we have that \begin{equation*}
        \min_{x\in\mathbf{R}}\sum_{a\in A}s_ae(ax)\leqslant -c'_Sn^{c_S}.
    \end{equation*}
\end{theorem}
This result only applies when the set of coefficients $S$ has fixed size (or sufficiently small size compared to $n$). In analogy with the $L^1$ conjecture \cite{mcgeheepignosmith}, one might wonder whether $K_S(n)$ exhibits polynomial growth whenever $\min_{s\in S}|s|\gg 1$, irrespective of the size of $S$. This is false however:~a rather deep construction of Belov and Konyagin \cite{konyaginbelov} shows that there exist cosine polynomials $f(x)=\sum_{j=1}^na_j\cos(jx)$ with positive integer coefficients in $S=\{1,2,\dots,O(\log n)^3\}$ for which $\min_x f(x)\geqslant -O(\log n)^3$. One can alternatively interpret this as saying that the conclusion of Theorem~\ref{th:chowlapoly} fails dramatically if one considers \emph{multisets} $A$ of integers of size $n$ (whereas Chowla's cosine problem only deals with genuine sets of $n$ distinct integers). Finally, we reiterate an interesting open question of Ruzsa, which is to estimate $K_{[0.99,1.01]}(n)$, namely negative values of cosine polynomials with coefficients in the interval $[0.99,1.01]$. Seemingly the best known result for Ruzsa's problem is still the rather weak bound $K_{[0.99,1.01]}(n)\gg \log n$ which follows from a simple application of the resolution of the $L^1$ conjecture in \cite{mcgeheepignosmith}. 
\par \textbf{Acknowledgements:} The author would like to thank Thomas F. Bloom for sharing several insightful perspectives around Chowla's cosine problem and related questions.

 \section{Notation and prerequisites}
We use the asymptotic notation $f=O(g)$, $f\ll g$, or $g=\Omega(f)$ if there is an absolute constant $C$ such that $|f(y)|\leqslant C g(y)$ for all $y$ in a certain domain which will be clear from context. We write $f=o(g)$ if $f(y)/g(y)\to 0$ as $y\to\infty$, and we write $f\asymp g$ if both $f\ll g$ and $g\ll f$. We sometimes include an extra subscript such as $f=O_S(g)$ to indicate that the implied constant $C$ is allowed to depend on $S$.

\smallskip

We write $\mathbf{N},\mathbf{Z},\mathbf{R}$ and $\mathbf{C}$ for the natural, integer, real, and complex numbers, respectively. For two sets $E_1,E_2$, we define the sumset $E_1+E_2:=\{e_1+e_2:e_1\in E_1, e_2\in E_2\}$ and difference set $E_1-E_2=\{e_1-e_2:e_1\in E_1,e_2\in E_2\}$. We use the standard notation $e(x)=e^{2\pi i x}$ for $x\in \mathbf{R}$, and, as this function is $1$-periodic, it is natural to consider the domain of the variable $x$ to be $\mathbf{T}=\mathbf{R}/\mathbf{Z}$. For a suitably integrable function $g:\mathbf{T}\to\mathbf{C}$ we denote, for $p\in[1,\infty)$, its $L^p$-norm by $$\lVert g\rVert_p\vcentcolon= \left(\int_0^1|g(x)|^p\,dx\right)^{1/p},$$ and $\lVert g\rVert_\infty$ is the smallest constant $M$ such that $|g(x)|\leqslant M$ holds almost everywhere. Its Fourier transform is the function $\hat{g}:\mathbf{Z}\to\mathbf{C}$ which is defined by $\hat{g}(n)=\int_\mathbf{T}g(x)e(-nx)\,dx$. If $f:\mathbf{Z}\to\mathbf{C}$ is a function, we shall denote its Fourier transform by $\hat{f}(x)=\sum_{n\in\mathbf{Z}}f(n)e(nx)$ which is a priori simply a formal series. In this paper, $\hat{f}:\mathbf{T}\to\mathbf{C}$ will always be a trigonometric polynomial. Of specific importance are the Fourier transforms of (indicator functions of) finite sets $A\subset\mathbf{Z}$, and we write $\hat{1}_A(x):=\sum_{a\in A}e(ax)$.

\medskip

For two functions $g,h\in L^2(\mathbf{T})$ we define $\langle g,h\rangle =\int_\mathbf{T} g(x)\overline{h(x)}\,dx$ and we shall frequently make use of Parseval's theorem which states that $\langle g,h\rangle =\sum_{n\in\mathbf{Z}}\hat{g}(n)\overline{\hat{h}(n)}$. We also define their convolution to be the function $g*h:\To\to\mathbf{C}$ given by $(g*h)(x) = \int_\mathbf{T}g(x-y)h(y)\,dy$, and we note the basic fact that $\widehat{g*h}(n) = \hat{g}(n)\hat{h}(n)$. This formula for the Fourier coefficients of a convolution plays an important role throughout this paper, as it implies crucial properties such as that $(\hat{1}_A*\hat{1}_B)(x)=\hat{1}_{A\cap B}(x)$ for any two finite $A,B\subset \mathbf{Z}$. Finally, Young's convolution inequality states that 
\begin{equation}\label{eq:Young}
\lVert g*h\rVert_r\leqslant\lVert g\rVert_p\lVert h\rVert_q    
\end{equation} whenever $p,q,r\in[1,\infty]$ satisfy $1+1/r=1/p+1/q$.

\section{Preliminary observations}
Note that for a set $B\subset \mathbf{N}$ of positive integers, we may write $$2\sum_{b\in B}\cos(2\pi bx)=\sum_{a\in B\cup-B}e(ax)=\hat{1}_A(x),$$ where $A=B\cup -B$. We may therefore consider the following equivalent setup of the Chowla cosine problem, which is notationally more convenient. Let $A\subset\mathbf{Z}\setminus\{0\}$ be symmetric, meaning that $A=-A$, so that $\hat{1}_A(x)$ is a real-valued function on $\mathbf{T}=\mathbf{R}/\mathbf{Z}$ (and a sum of $|A|/2$ cosines). Let $K>0$ be a constant such that $$\hat{1}_A(x)+K=\sum_{a\in A}e(ax)+K\geqslant 0,\quad \forall x\in\mathbf{T}.$$ Our aim is to show that $K$ is large in terms of $n=|A|$, and in order to establish Theorem~\ref{th:chowlapoly}, we need to show that $K\gg n^{1/5-o(1)}$. We introduce some convenient notation.
\begin{definition}\normalfont
    For a real-valued function $g\in L^1(\mathbf{T})$ with $\int_\mathbf{T}g(x)\,dx=0$, we define
    \begin{equation}
        \lVert g \rVert_{\min}=-\operatorname{ess\, inf}_{x\in \mathbf{T}}g(x).
    \end{equation} Equivalently, whenever this essential infimum is finite, $\lVert g \rVert_{\min}$ is the smallest number $K$ such that $g(x)+K$ is nonnegative for almost all $x\in \To$. 
\end{definition}
The assumption that $\int_\mathbf{T}g(x)\,dx=0$ guarantees that $\lVert g\rVert_{\min}$ is nonnegative. We may decompose any real-valued function $g(x)$ as $g=g^+-g^-$ where $g^+(x)=\max(g(x),0)$ and $g^-(x)=\max(-g(x),0)$ are nonnegative functions. It is clear that $\lVert g\rVert_{\min}=\lVert g^-\rVert_\infty$ is a one-sided estimate for $g$, so that $\lVert \cdot \rVert_{\min}$ satisfies the triangle inequality and that $\lVert \lambda g\rVert_{\min}=\lambda\lVert g\rVert_{\min}$ for $\lambda>0$ (although it is not a norm since this is not well-behaved under dilations by negative $\lambda$). We require some useful basic lemmas.
\begin{lemma}\label{lem:mintoL^1}
    Let $g\in L^1(\mathbf{T})$ be a real-valued function with $\int_\To g(x)\,dx = 0$. Then
    $\lVert g \rVert_{1}\leqslant 2\lVert g \rVert_{\min}.$
\end{lemma}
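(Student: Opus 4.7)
The plan is to use the standard decomposition $g = g^+ - g^-$ into positive and negative parts, which is already recalled in the excerpt, and then exploit the mean-zero hypothesis to convert an $L^\infty$ bound on $g^-$ into an $L^1$ bound on $g$.

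First I would write $|g| = g^+ + g^-$, so that $\|g\|_1 = \int_{\mathbf{T}} g^+(x)\,dx + \int_{\mathbf{T}} g^-(x)\,dx$. Second, I would invoke the assumption $\int_{\mathbf{T}} g(x)\,dx = 0$, which in terms of $g^\pm$ reads $\int_{\mathbf{T}} g^+(x)\,dx = \int_{\mathbf{T}} g^-(x)\,dx$. Combining these two observations gives the identity
\begin{equation*}
\|g\|_1 = 2\int_{\mathbf{T}} g^-(x)\,dx.
\end{equation*}

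The final step is the trivial $L^\infty$-to-$L^1$ bound on the torus (which has total mass $1$): $\int_{\mathbf{T}} g^-(x)\,dx \leqslant \|g^-\|_\infty$. The excerpt already records that $\|g^-\|_\infty = \|g\|_{\min}$, so chaining the estimates yields $\|g\|_1 \leqslant 2\|g\|_{\min}$, as required.

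There is no real obstacle here; the only point worth flagging is that the factor of $2$ is sharp and arises precisely because the mean-zero condition forces $\int g^+ = \int g^-$, so the positive mass of $g$ contributes an amount equal to the negative mass, even though the pointwise upper bound on $g^+$ may be much larger than $\|g\|_{\min}$.
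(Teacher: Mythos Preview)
Your proof is correct and is essentially the same as the paper's argument. The paper phrases it slightly differently, using the pointwise inequality $|g(x)|\leqslant 2\lVert g\rVert_{\min}+g(x)$ and then integrating, but unwinding that inequality gives exactly your bound $g^-(x)\leqslant \lVert g\rVert_{\min}$, so the two arguments coincide.
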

\begin{proof}
    By definition, $g(x)\geqslant -\lVert g \rVert_{\min}$ holds for almost all $x$. Hence, the bound $|g(x)|\leqslant 2\lVert g \rVert_{\min} +g(x)$ also holds a.e. Integrating this inequality over $\mathbf{T}$ gives the result because $\int_\To g(x)\,dx=0$.
\end{proof}
The following two results show how convolution interacts with $\lVert\cdot\rVert_{\min}$.
\begin{lemma}\label{lem:crucial}
    Let $g,h\in L^2(\mathbf{T})$ be real-valued functions with $\int_\To g = \int_\To h = 0$. Then
    \begin{equation*}\label{eq:convminL1}
        \lVert g*h \rVert_{\min}\leqslant \frac{\lVert g \rVert_{\min}\lVert h \rVert_1+\lVert g \rVert_{1}\lVert h \rVert_{\min}}{2}.
    \end{equation*}
\end{lemma}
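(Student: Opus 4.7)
The plan is to split both $g$ and $h$ into their positive and negative parts and then simply bound the cross-terms using Young's inequality. Concretely, write $g=g^+-g^-$ and $h=h^+-h^-$, where $g^\pm,h^\pm\geqslant 0$. Then
\[
g*h \;=\; g^+*h^+ \;-\; g^+*h^- \;-\; g^-*h^+ \;+\; g^-*h^-,
\]
and since all four convolutions of the nonnegative functions $g^\pm,h^\pm$ are pointwise nonnegative, the first and last terms can be discarded to obtain the pointwise lower bound
\[
g*h \;\geqslant\; -\,g^+*h^- \;-\; g^-*h^+.
\]

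Next, I would estimate each of the two remaining convolutions using the $L^\infty$ case of Young's inequality \eqref{eq:Young}: $\lVert g^+ * h^-\rVert_\infty \leqslant \lVert g^+\rVert_1 \lVert h^-\rVert_\infty$ and similarly with the roles reversed. The key identifications are $\lVert h^-\rVert_\infty = \lVert h \rVert_{\min}$ (by definition) and $\lVert g^+\rVert_1 = \tfrac{1}{2}\lVert g\rVert_1$, which follows from the hypothesis $\int_{\mathbf{T}} g = 0$: this gives $\int g^+ = \int g^-$, hence each equals $\tfrac{1}{2}\int|g|$. The analogous identities hold with $g$ and $h$ swapped. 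Combining the pointwise lower bound with these two inequalities yields
\[
g*h(x) \;\geqslant\; -\,\frac{\lVert g\rVert_1 \lVert h\rVert_{\min}}{2} \;-\; \frac{\lVert g\rVert_{\min}\lVert h\rVert_1}{2}
\]
for every $x\in\mathbf{T}$, which is exactly the claimed bound on $\lVert g*h\rVert_{\min}$ (noting that $\int_{\mathbf{T}} g*h = (\int g)(\int h) = 0$, so the $\lVert\cdot\rVert_{\min}$ is well defined).

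There is really no serious obstacle here; the only thing to be careful about is matching up the two different normalisations ($L^1$ versus $L^\infty$) on the positive and negative parts, and ensuring that the discarded terms $g^+*h^+$ and $g^-*h^-$ are indeed nonnegative pointwise (not just on average), which follows immediately from the nonnegativity of $g^\pm,h^\pm$ and the integral definition of convolution. The factor of $\tfrac{1}{2}$ in the statement is precisely the factor arising from $\lVert g^{\pm}\rVert_1 = \tfrac12\lVert g\rVert_1$ under the mean-zero hypothesis.
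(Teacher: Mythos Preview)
Your proof is correct and essentially identical to the paper's own argument: both decompose $g,h$ into positive and negative parts, discard the two nonnegative cross-terms $g^+*h^+$ and $g^-*h^-$, and then bound each remaining term by pulling out the $L^\infty$-norm of the negative part and the $L^1$-norm of the positive part, using $\int g^\pm=\tfrac12\lVert g\rVert_1$ from the mean-zero hypothesis. The only cosmetic difference is that you invoke Young's inequality by name whereas the paper writes the convolution integral out explicitly.
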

\begin{proof}
    As $g,h\in L^2(\mathbf{T})$, their convolution $g*h$ is well-defined pointwise, and even continuous on $\mathbf{T}$. We write $g^+=\max(g,0)$ and $g^-=\max(-g,0)$ so that $g=g^+-g^-$ and $|g|=g^++g^-$, and similarly for $h$. As $g^+,g^-,h^+,h^-$ are pointwise nonnegative, we may estimate
    \begin{align*}
        (g*h)(x)&=\int_\To (g^+(u)-g^-(u))(h^+(x-u)-h^-(x-u))\, du\\
        &\geqslant -\int_\To g^-(u)h^+(x-u)\, du - \int_\To g^+(u)h^-(x-u)\, du\\
        &\geqslant -\lVert g \rVert_{\min}\int_\To h^+(u)\,du -\lVert h \rVert_{\min}\int_\To g^+(u)\,du .
    \end{align*}
    Note that 
    $\int_\To g^+(u)\,du=\int_\To g^-(u)\,du$ because of the assumption that $\int_\To g(u)\,du = 0$. As $$\lVert g\rVert_1=\int_\To g^+(u)\,du+\int_\To g^-(u)\,du,$$ we therefore deduce that $\int_\To g^+(u)\,du=\lVert g\rVert_1/2$, and the analogous result for $h$. Plugging this into the inequality above shows that $(g*h)(x)\geqslant -\frac{1}{2}\left(\lVert g \rVert_{\min}\lVert h \rVert_1+\lVert g \rVert_{1}\lVert h \rVert_{\min}\right)$ for all $x\in\To$.
\end{proof}
The next lemma provides a simpler one-sided estimate for the convolution of two functions in terms of their individual one-sided estimates. We note that, up to an extra factor of 2, this may be deduced by combining the previous two lemmas.
\begin{lemma}\label{lem:Kconv}
    Let $g,h\in L^2(\mathbf{T})$ be real-valued functions with $\int_\To g = \int_\To h = 0$. Then
    $$\lVert g*h \rVert_{\min}\leqslant \lVert g \rVert_{\min}\lVert h \rVert_{\min}.$$
\end{lemma}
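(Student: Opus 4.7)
The plan is to argue via nonnegative shifts. Set $K_g = \lVert g\rVert_{\min}$ and $K_h = \lVert h\rVert_{\min}$. By the definition of $\lVert\cdot\rVert_{\min}$, the shifted functions $G(x) = g(x) + K_g$ and $H(x) = h(x) + K_h$ are pointwise nonnegative on $\mathbf{T}$.

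Since $G, H \geq 0$, their convolution satisfies $(G*H)(x) \geq 0$ for every $x \in \mathbf{T}$. Now I would expand the convolution using bilinearity. The key observation is that convolving a zero-mean function with a constant yields zero: if $c$ is constant, then $(g*c)(x) = c\int_\mathbf{T} g(y)\,dy = 0$, and analogously for $h$. Moreover, the convolution of two constants $K_g$ and $K_h$ on $\mathbf{T}$ is the constant $K_g K_h$. Hence
\begin{equation*}
(G*H)(x) = (g*h)(x) + (g*K_h)(x) + (K_g*h)(x) + (K_g * K_h)(x) = (g*h)(x) + K_g K_h.
\end{equation*}
Combining with $(G*H)(x) \geq 0$ yields $(g*h)(x) \geq -K_g K_h$ for all $x$, which is precisely the claimed bound $\lVert g*h\rVert_{\min} \leq \lVert g\rVert_{\min}\lVert h\rVert_{\min}$.

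There is essentially no obstacle here: the only subtlety is recognising that the cross-terms in the expansion vanish because of the mean-zero hypothesis, after which the result falls out immediately from positivity of the convolution of two nonnegative functions. This is in fact strictly better, by a factor of $2$, than what one obtains from combining Lemma~\ref{lem:mintoL^1} with Lemma~\ref{lem:crucial}, as the remark preceding the statement anticipates.
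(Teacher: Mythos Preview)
Your proof is correct and is essentially identical to the paper's own argument: shift $g$ and $h$ by $\lVert g\rVert_{\min}$ and $\lVert h\rVert_{\min}$ to make them nonnegative, convolve, and use the mean-zero hypotheses to kill the cross-terms. Your closing remark about the factor of $2$ also matches the paper's comment preceding the lemma.
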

\begin{proof}
    Note that the functions $g(x)+\lVert g \rVert_{\min}$ and $h(x)+\lVert h \rVert_{\min}$ are nonnegative a.e. Hence, so is their convolution which, as $\int_\To g(x)\,dx = \int_\To h(x)\,dx = 0$, has the form
    \begin{align*}
        \left(g+\lVert g \rVert_{\min}\right)*\left(h+\lVert h \rVert_{\min}\right) &= g*h + \lVert g\rVert_{\min}\lVert h \rVert_{\min}.
    \end{align*}
    We deduce that $g*h(x)\geqslant -\lVert g\rVert_{\min}\lVert h \rVert_{\min}$ for all $x\in \To$.
    
\end{proof}

\section{Arithmetic results from Roth, Bourgain and Ruzsa}
We will make use of two intermediate results from the approaches of Roth \cite{Roth1973CosinePolynomials}, Bourgain \cite{Bourgainchowla1}, and Ruzsa \cite{ruzsachowla}. Both results indicate that sets $A$ either contain or lack certain types of arithmetic structure under the assumption that $\lVert\hat{1}_A\rVert_{\min}$ is small, this being a major theme in all three papers.

\medskip

The first is due to Roth, who showed (something slightly stronger than) that $A$ has large additive energy under the assumption that $\lVert \hat{1}_A\rVert_{\min}$ is not too large. For the convenience of the reader, we reproduce the short proof here.
\begin{lemma}[\cite{Roth1973CosinePolynomials}, Lemma 5]\label{lem:roth}
    Let $A=-A\subset\mathbf{Z}\setminus\{0\}$ be finite, and suppose that $\hat{1}_A(x)+K\geqslant 0$. Then every subset $B\subseteq A$ of size $|B|\geqslant 2K^2$ satisfies $$\#\{(b_1,b_2)\in B^2:b_1-b_2\in A\}\geqslant \frac{|B|^2}{2K}.$$ 
\end{lemma}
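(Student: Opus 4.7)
The plan is to exploit the nonnegativity of $F(x) := \hat{1}_A(x)+K$ by integrating it against a carefully chosen squared modulus. By Parseval, the desired count can be written as a Fourier integral
$$R := \#\{(b_1,b_2)\in B^2 : b_1-b_2\in A\} = \int_{\mathbf{T}} |\hat{1}_B(x)|^2 \hat{1}_A(x)\,dx,$$
and naively using $\int |\hat{1}_B|^2 F\,dx \geq 0$ would only yield the useless bound $R\geq -K|B|$. The trick is to integrate $F$ against a \emph{perturbation} of $|\hat{1}_B|^2$ that produces a useful linear-in-$\lambda$ cross-term, for which the hypothesis $B\subseteq A$ is the source of leverage.

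Specifically, I would apply the inequality
$$\int_{\mathbf{T}} |\hat{1}_B(x)-\lambda|^2 \bigl(\hat{1}_A(x)+K\bigr)\,dx \geq 0$$
for a real parameter $\lambda>0$ to be optimised. Expanding $|\hat{1}_B-\lambda|^2 = |\hat{1}_B|^2 - 2\lambda\operatorname{Re}\hat{1}_B + \lambda^2$ and evaluating each of the resulting six integrals via Parseval is routine: $\int|\hat{1}_B|^2 \hat{1}_A = R$, $\int|\hat{1}_B|^2 = |B|$, $\int\hat{1}_B = 0$ (since $0\notin B$), $\int\hat{1}_A = 0$. The only non-trivial piece is the cross-term $\int \hat{1}_B(x)\hat{1}_A(x)\,dx = \#\{b\in B : -b\in A\}$, which equals $|B|$ precisely because $B\subseteq A = -A$. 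Collecting terms gives
$$R + K|B| - 2\lambda|B| + K\lambda^2 \geq 0.$$

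The right-hand side is a quadratic in $\lambda$ minimised at $\lambda = |B|/K$, where it takes value $R + K|B| - |B|^2/K$. Hence
$$R \geq \frac{|B|^2}{K} - K|B|,$$
and the hypothesis $|B|\geq 2K^2$ makes the subtracted term at most half of the leading one, yielding the claimed bound $R \geq |B|^2/(2K)$. There is no serious obstacle here: the entire argument is a one-parameter optimisation, and the only point requiring care is the bookkeeping of the cross-term, where the combined use of $A = -A$, $B\subseteq A$, and $0\notin B$ is what makes the whole mechanism work.
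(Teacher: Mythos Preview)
Your proof is correct and essentially the same as the paper's: the paper applies Cauchy--Schwarz with the weight $\hat{1}_A+K$ to the pair $\hat{1}_B$ and the constant function $1$, while your inequality $\int|\hat{1}_B-\lambda|^2(\hat{1}_A+K)\geqslant 0$ is precisely the variational form of that Cauchy--Schwarz step, and both routes land on the identical intermediate bound $R\geqslant |B|^2/K - K|B|$. Your bookkeeping of the cross-term (using $B\subseteq A=-A$ and $0\notin B$) is exactly what the paper encodes in the identity $\langle \hat{1}_B,\hat{1}_A+K\rangle = |B|$.
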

\begin{proof}
    Note that the assumption implies that $|\hat{1}_A(x)+K|=\hat{1}_A(x)+K$. By applying the Cauchy-Schwarz inequality to the functions $\hat{1}_B(x)|\hat{1}_A(x)+K|^{1/2}$ and $|\hat{1}_A(x)+K|^{1/2}$, we see that
    \begin{align*}
        |B|&=\langle \hat{1}_B,\hat{1}_A+K\rangle\\
        &\leqslant \left(\int_\To (\hat{1}_A(x)+K)\,dx\right)^{1/2}\left( \int_\To |\hat{1}_B(x)|^2(\hat{1}_A(x)+K)\,dx\right)^{1/2}\\
        &=K^{1/2}\bigg( \#\{(b_1,b_2)\in B^2:b_1-b_2\in A\}+K|B|\bigg)^{1/2},
    \end{align*}
    where the two equalities are applications of Parseval.
    Rearranging gives the inequality $$\#\{(b_1,b_2)\in B^2:b_1-b_2\in A\}\geqslant \frac{|B|^2}{K}-K|B|,$$ which yields the desired result since $|B|\geqslant 2K^2$.
\end{proof}
We have taken the following lemma from Ruzsa's paper, though very similar results already appear in those of Roth and Bourgain. We shall prove a more general version of this lemma in Lemma \ref{lem:ruzsachowlaS}. 
\begin{lemma}[\cite{ruzsachowla}, Lemma 3.1]\label{lem:ruzsadifference}
    Let $A=-A\subset\mathbf{Z}\setminus\{0\}$ be finite. Let $U,V\subset \mathbf{Z}$ be such that $U-V+\{0,d\}\subset A$ for some $d\neq 0$. Then $$\lVert \hat{1}_A\rVert_{\min}\geqslant\frac12 \sqrt{\min(|U|,|V|)}.$$
\end{lemma}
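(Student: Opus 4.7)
The plan is to adapt Roth's Cauchy--Schwarz argument (Lemma~\ref{lem:roth}) using a carefully chosen non-negative function supported in $A$. Assume without loss of generality that $|U| \leq |V|$ and set $K := \lVert \hat{1}_A \rVert_{\min}$; the goal is $K \geq \tfrac{1}{2}\sqrt{|U|}$.

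Define $F : \mathbf{Z} \to \mathbf{R}_{\geq 0}$ by $F(n) := (1_U * 1_{-V})(n) + (1_U * 1_{-V})(n - d)$. The hypothesis $U - V + \{0, d\} \subseteq A$ ensures $\mathrm{supp}(F) \subseteq A$; moreover $0 \notin A$ forces $U \cap V = \emptyset$ (else $0 \in U - V \subseteq A$) and $U \cap (V - d) = \emptyset$ (else $0 \in U - V + d \subseteq A$), so $F(0) = 0$. A routine computation gives $\sum_n F(n) = 2|U||V|$, $\lVert F \rVert_{\ell^2}^2 \leq 4 \min(|U|, |V|) |U||V|$, and Fourier transform $\widehat{F}(x) = \hat{1}_U(x) \overline{\hat{1}_V(x)}(1 + e(dx))$. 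By Parseval and $F(0) = 0$,
\begin{equation*}
\int_\To (\hat{1}_A(x) + K) \overline{\widehat{F}(x)}\, dx = \sum_{n \in A} F(n) = 2 |U||V|,
\end{equation*}
and since $\hat{1}_A + K$ is a non-negative density of total mass $K$ on $\To$, Cauchy--Schwarz yields
\begin{equation*}
(2|U||V|)^2 \leq K \cdot \int_\To (\hat{1}_A(x) + K) |\widehat{F}(x)|^2 \, dx.
\end{equation*}

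The main obstacle is a sharp upper bound on the right-hand side. Writing $|\widehat{F}|^2 = \widehat{F * \tilde F}$ with $\tilde F(n) = F(-n)$, we have $\int (\hat{1}_A + K)|\widehat{F}|^2 \, dx = \sum_{a \in A}(F * \tilde F)(a) + K \lVert F\rVert_{\ell^2}^2$, which is at most $4|U|^2|V|^2 + 4K \min(|U|, |V|) |U||V|$ using the trivial bound $\sum_{a \in A}(F * \tilde F)(a) \leq (\sum_n F(n))^2$. Combined with the Cauchy--Schwarz inequality above, this yields $(1 - K)|U||V| \leq K^2 \min(|U|, |V|)$, which already implies $K \geq \tfrac{1}{2}\sqrt{|U|}$ in the regime $K \leq 3/4$. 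To extend the bound across the full range of $K$, one must sharpen the estimate on $\sum_{a \in A}(F * \tilde F)(a)$ by exploiting that $F * \tilde F$ is supported on the structured set $(U - U) + (V - V) + \{-d, 0, d\}$, whose intersection with $A$ can be controlled using the symmetry $A = -A$ together with iteration or additional Fourier-analytic arguments.
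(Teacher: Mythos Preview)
Your argument has a genuine gap that you yourself flag but do not close. After Cauchy--Schwarz you reach
\[
(2|U||V|)^2 \;\leqslant\; K\Bigl(\sum_{a\in A}(F*\tilde F)(a) + K\lVert F\rVert_{\ell^2}^2\Bigr),
\]
and then bound $\sum_{a\in A}(F*\tilde F)(a)$ by the \emph{trivial} estimate $(\sum_n F(n))^2 = 4|U|^2|V|^2$. But this upper bound exactly matches the left-hand side, so the resulting inequality $(1-K)|V|\leqslant K^2$ is vacuous as soon as $K\geqslant 1$. Nothing prevents, say, $K=2$ while $|U|=100$, so the argument as written does not prove the lemma.

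Your proposed repair---controlling $\sum_{a\in A}(F*\tilde F)(a)$ via the support $(U-U)+(V-V)+\{-d,0,d\}$---does not work either: the hypothesis constrains only $U-V+\{0,d\}\subset A$ and says nothing about how $(U-U)+(V-V)$ sits inside $A$. It is entirely consistent with the hypotheses that $A$ contains the full support of $F*\tilde F$, in which case the trivial bound is sharp and no improvement is possible along these lines.

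The paper does not prove this lemma (it is quoted from Ruzsa), but the proof of the closely related Lemma~\ref{lem:ruzsachowlaS} shows what is missing. There one tests against a \emph{difference} $\hat 1_U-\hat 1_V$ rather than a non-negative convolution; after Cauchy--Schwarz the cross term is
\[
\sum_{u_1\neq u_2}\hat F(u_1-u_2)\;-\;2\!\!\sum_{u\in U,\,v\in V}\hat F(u-v)\;+\!\!\sum_{v_1\neq v_2}\hat F(v_1-v_2),
\]
and the hypothesis $U-V\subset A$ forces the middle sum to be \emph{maximal}, so it dominates the two outer sums and the whole expression is $\leqslant 0$. This sign cancellation is precisely what your non-negative test function $F$ cannot produce. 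To salvage your approach you would need a test function whose square, when paired with $\hat 1_A$, admits a non-trivial upper bound coming from the inclusion $U-V+\{0,d\}\subset A$; as it stands, your $F$ uses the hypothesis only to place its support inside $A$, which is not enough.
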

We will require the following immediate corollary for Theorem~\ref{th:chowlapoly}. 
\begin{corollary}\label{cor:ruzsachowla}
Let $A=-A\subset\mathbf{Z}\setminus\{0\}$ be finite. Suppose that the arithmetic progression $P$ is contained in $A$. Then $|P|\ll \lVert \hat{1}_A\rVert_{\min}^2$.
\end{corollary}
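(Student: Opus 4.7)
The plan is to apply Lemma~\ref{lem:ruzsadifference} directly, after expressing (essentially all of) the AP $P$ in the form $U - V + \{0, d\}$ with $|U|, |V| \gg |P|$.

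Concretely, write $P = \{a + jd : 0 \leqslant j < k\}$ where $k = |P|$ and $d \neq 0$ (the case of a one-term AP is vacuous). I would then set $m = \lfloor k/2 \rfloor$ and take
$$U = \{a + id : 0 \leqslant i < m\}, \qquad V = \{-jd : 0 \leqslant j < m\}.$$
A routine computation shows that $U - V = \{a + \ell d : 0 \leqslant \ell \leqslant 2m - 2\}$, and therefore $U - V + \{0,d\} = \{a + \ell d : 0 \leqslant \ell \leqslant 2m - 1\} \subseteq P \subseteq A$, since $2m - 1 \leqslant k - 1$. Feeding this pair $(U, V)$ together with the shift $d \neq 0$ into Lemma~\ref{lem:ruzsadifference} yields $\lVert \hat{1}_A \rVert_{\min} \geqslant \tfrac{1}{2}\sqrt{m} \gg \sqrt{k}$, which rearranges to $|P| \ll \lVert \hat{1}_A \rVert_{\min}^2$.

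The only real content is the additive halving $U + (-V) \approx P$: an AP of length $k$ decomposes as the sumset of two APs of length about $k/2$, with the auxiliary $\{0,d\}$ absorbing the parity of $k$. Once this combinatorial observation is made, the corollary is a one-line application of Lemma~\ref{lem:ruzsadifference}; I do not anticipate any genuine obstacle. The reason Ruzsa's lemma is well-suited to the AP setting, as opposed to more general Bohr-type or Freiman-type configurations, is precisely that APs admit such a clean and symmetric halving.
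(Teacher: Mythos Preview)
Your proposal is correct and follows essentially the same route as the paper. The paper does not spell out a proof of the corollary itself (it simply calls it ``immediate''), but the proof of the generalisation, Lemma~\ref{lem:ruzsachowlaS}, begins with precisely your AP-halving observation: writing $P=\{x_0-pd,\dots,x_0+pd\}$ and taking $U_0=\{x_0+d,\dots,x_0+pd\}$, one checks $U_0-(-x_0+U_0)+\{0,d\}\subset P$, which is your decomposition up to a harmless recentering.
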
 
To deal with the general setting in Theorem~\ref{th:generalcoefintro}, where we prove one-sided estimates for cosine polynomials with coefficients in an arbitrary finite set $S\subset\mathbf{R}\setminus\{0\}$, we need the following extension of Corollary~\ref{cor:ruzsachowla}. The proof is a rather natural adaptation of the argument of Ruzsa.
\begin{lemma}\label{lem:ruzsachowlaS}
Let $S=\{s_1>s_2>\dots>s_k\}\subset\mathbf{R}\setminus\{0\}$ be finite, and $s_1>0$. Suppose that $A^{(1)},\dots,A^{(k)}\subset\mathbf{Z}\setminus\{0\}$ are pairwise disjoint, finite, and symmetric. If $P$ is an arithmetic progression that is contained in $A^{(1)}$, then $$|P|\ll_S\left\lVert \sum_{j=1}^ks_j\hat{1}_{A^{(j)}}\right\rVert_{\min}^2.$$    
\end{lemma}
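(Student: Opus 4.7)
The plan is to mimic Ruzsa's proof of \cref{lem:ruzsadifference} (the single-set case $k=1$), with the new structural ingredient being that the pairwise disjointness of the $A^{(j)}$ lets us isolate the dominant coefficient $s_1$ via a test function whose Fourier support lies entirely inside $A^{(1)}$. Writing $f = \sum_j s_j \hat{1}_{A^{(j)}}$ and $K = \|f\|_{\min}$, we have $f + K \geq 0$ and $\int_\mathbf{T}(f+K) = K$ because $0 \notin \bigcup_j A^{(j)}$. For the AP $P = \{a + id : 0 \leq i \leq m-1\} \subseteq A^{(1)}$, I set $r = \lfloor m/2 \rfloor$, $U = \{a + id : 0 \leq i < r\}$ and $V = \{-id : 0 \leq i < r\}$; since $0 \notin P$, one checks directly that $U - V + \{0, d\} \subseteq P$ and $U \cap V = (U+d) \cap V = \emptyset$. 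Following Ruzsa, define
$$\psi(x) = 2 \operatorname{Re}\bigl(e(dx)\, \hat{1}_U(x) \overline{\hat{1}_V(x)}\bigr),$$
a real-valued trigonometric polynomial whose Fourier coefficients $\hat\psi(n) = r_{U,V}(n-d) + r_{U,V}(-n-d)$ are all nonnegative, are supported in $P \cup (-P) \subseteq A^{(1)}$, vanish at $n=0$, and sum to $\psi(0) = 2|U||V|$.

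Because the sets $A^{(j)}$ are pairwise disjoint and $\operatorname{supp}(\hat\psi) \subseteq A^{(1)}$, the integral $\int \psi \hat{1}_{A^{(j)}}\, dx = 0$ for every $j \neq 1$, and so
$$\int_\mathbf{T} \psi(x) (f(x) + K)\, dx = 2 s_1 |U||V|.$$
Applying Cauchy--Schwarz with the nonnegative weight $f + K$,
$$(2 s_1 |U||V|)^2 \leq K \int \psi^2 (f+K)\, dx = K \int \psi^2 f\, dx + K^2 \|\psi\|_2^2.$$
A direct computation yields $\|\psi\|_2^2 = \sum_n \hat\psi(n)^2 \leq \bigl(\max_n \hat\psi(n)\bigr)\bigl(\sum_n \hat\psi(n)\bigr) \ll |U||V|\min(|U|, |V|)$. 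For the other term, use $\widehat{\psi^2} = \hat\psi * \hat\psi \geq 0$ and $\sum_n \widehat{\psi^2}(n) = \psi(0)^2 = 4|U|^2|V|^2$ to bound
$$\left|\int \psi^2 f\, dx\right| \leq \max_j |s_j| \sum_n \widehat{\psi^2}(n) \ll_S |U|^2|V|^2.$$

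Substituting these two estimates and rearranging produces the key inequality
$$|U||V|\bigl(s_1^2 - O_S(K)\bigr) \ll K^2 \min(|U|, |V|).$$
When $K \leq c_S$ for a suitable positive constant $c_S$ depending only on $S$, the factor $s_1^2 - O_S(K)$ is at least $s_1^2/2$, and the choice $|U| = |V| \asymp m/2$ gives $m \ll_S K^2$, which is the desired conclusion. The hard part will be treating the complementary regime $K > c_S$, where the crude bound $|\int \psi^2 f| \ll_S |U|^2|V|^2$ is too lossy and the rearrangement provides no information. I expect to close this gap by exploiting the observation that $\widehat{\psi^2}$ is supported in $(P+P) \cup (P-P) \cup (-P-P)$ -- a set of size $O(m)$ rather than the trivial $O(|U|^2|V|^2)$ -- to sharpen the estimate on $|\int \psi^2 f|$ in terms of the overlaps $|A^{(j)} \cap \operatorname{supp}(\widehat{\psi^2})|$; alternatively, in that regime one may iteratively apply the Cauchy--Schwarz inequality to carefully chosen sub-APs of $P$ until the small-$K$ regime is reached.
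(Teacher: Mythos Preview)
Your argument handles the regime $K\leqslant c_S$ correctly, but the gap you identify in the large-$K$ regime is real and neither of your proposed fixes closes it. For the support idea: even granting $|\operatorname{supp}(\widehat{\psi^2})|\ll m$, the pointwise bound $\widehat{\psi^2}(n)\leqslant(\max_\ell\hat\psi(\ell))\sum_\ell\hat\psi(\ell)\ll r\cdot r^2$ still only yields $|\int\psi^2 f|\ll_S m\cdot r^3\asymp r^4$, the same estimate as before; more fundamentally, nothing in the hypotheses controls how $(P+P)\cup(P-P)\cup(-P-P)$ meets the various $A^{(j)}$, so the overlaps you propose to exploit are unconstrained. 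The sub-AP iteration fails for the same reason at every scale, since the threshold $c_S$ depends only on $S$ and not on the length of the progression.

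The paper's proof uses a genuinely different construction that sidesteps the difficulty. Rather than taking both $U$ and $V$ so that $\hat\psi$ is supported inside $A^{(1)}$, the paper \emph{slides} a half-progression $U_0\subset P$ along the direction $d$ until it first drops more than halfway off $A^{(1)}$; this yields sets $U,V$ of size $\gg|P|$ with $U\subset A^{(1)}$, $V\cap A^{(1)}=\emptyset$, and crucially $U-V\subset A^{(1)}$. The test function is then $\hat 1_U-\hat 1_V$. The linear term gives $\int(\hat 1_U-\hat 1_V)(F+K)\geqslant s_1|U|-s_2|V|\gg_S p$, using that elements of $U$ carry coefficient $s_1$ while those of $V$ carry at most $s_2$. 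In the quadratic term, the cross contribution $-2\sum_{u\in U,v\in V}\hat F(u-v)=-2s_1|U||V|$ is an \emph{equality} (because $U-V\subset A^{(1)}$), and when $|U|=|V|$ this exactly cancels the upper bound $s_1(|U|^2-|U|+|V|^2-|V|)$ on the diagonal sums, leaving $\int|\hat 1_U-\hat 1_V|^2(F+K)\leqslant K(|U|+|V|)\ll Kp$. Cauchy--Schwarz then gives $p\ll_S K^2$ with no restriction on the size of $K$. The structural point you are missing is that placing $V$ \emph{outside} $A^{(1)}$ while keeping $U-V$ \emph{inside} is precisely what buys exact cancellation in the quadratic term; with both $U$ and $V$ producing spectrum inside $A^{(1)}$, as in your $\psi$, the quadratic term is inevitably of order $s_1 r^4$ and swamps the main term once $K\gg_S 1$.
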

\begin{proof}
    Let $P=\{x_0-pd,x_0-(p-1)d,\dots, x_0+pd\}$ be an arithmetic progression with common difference $d$ and size $|P|\asymp p$, and assume that $P\subset A^{(1)}$. We define the subprogression $U_0:=\{x_0+d,\dots,x_0+pd\}$, and observe that $U_0-(-x_0+U_0)+\{0,d\}\subset P\subset A^{(1)}$. Now consider the translates $U_\ell:= U_0+\ell d$ of $U_0$ by multiples of $d$, for $\ell\in\mathbf{Z}$. Since $A^{(1)}$ is a finite set, it is clear that $U_\ell \cap A^{(1)}=\emptyset$ for all sufficiently large $\ell$. On the other hand, $U_0\cap A^{(1)}=U_0$ has size $p$ by the definition of $U_0$, and hence there must exist some nonnegative integer $\ell$ for which
    \begin{align*}
    |U_\ell\cap A^{(1)}|&\geqslant p/2,\\
    |U_{\ell+1}\cap A^{(1)}|&<p/2.
    \end{align*}
    If $|(-x_0+U_{\ell})\cap A^{(1)}|< p/2$ then we define $U:= U_\ell\cap A^{(1)}$ and $V:=(-x_0+U_{\ell})\setminus A^{(1)}$. In the remaining case where $|(-x_0+U_{\ell})\cap A^{(1)}|\geqslant p/2$, we define $U:= (-x_0+U_{\ell})\cap A^{(1)}$ and $V:=U_{\ell+1}\setminus A^{(1)}$. In both of these scenarios, we have therefore found sets $U,V$ which each have size at least $p/2$, and where $U\subset A^{(1)}$ and $V\cap A^{(1)}=\emptyset$. As we chose $U_0$ so that $U_0-(-x_0+U_0)+\{0,d\}\subset A^{(1)}$, we also have the crucial property that $U-V\subset A^{(1)}$. We may remove some elements from $U,V$ so that, additionally, $0\notin U,V$ and $|U|=|V|\geqslant p/2-1$. Let us write $F=\sum_{j=1}^ks_j\hat{1}_{A^{(j)}}$, and $K=\lVert F\rVert_{\min}$. Then by Parseval, we have
    \begin{align*}
        \int_\To(\hat{1}_{U}(x)-\hat{1}_V(x))(F(x)+K)\,dx&=\sum_{u\in U}\hat{F}(u)-\sum_{v\in V}\hat{F}(v)\\
        &\geqslant s_1|U|-\max(s_2,0)|V|\gg (s_1-\max(s_2,0))p\gg_S p.
    \end{align*}
    The assumption that $F(x)+K$ is pointwise nonnegative allows us to use the Cauchy-Schwarz inequality, giving the upper bound
    \begin{align*}
        &\int_\To(\hat{1}_{U}(x)-\hat{1}_V(x))(F(x)+K)\,dx\\
        &\leqslant \left(\int_\To(F(x)+K)\,dx\right)^{1/2}\left(\int_\To(F(x)+K)(|\hat{1}_U|^2+|\hat{1}_V|^2-\hat{1}_U\hat{1}_{-V}-\hat{1}_V\hat{1}_{-U})\,dx\right)^{1/2}\\
        &=K^{1/2}\left( K(|U|+|V|)+\sum_{u_i\in U:u_1\neq u_2}\hat{F}(u_1-u_2)-2\sum_{u\in U,v\in V}\hat{F}(u-v)+\sum_{v_i\in V:v_1\neq v_2}\hat{F}(v_1-v_2)\right)^{1/2}\\
        &\leqslant K^{1/2}\left( K|U|+K|V|\right)^{1/2}\ll K p^{1/2},
    \end{align*}
    where we used Parseval, together with the facts that $U-V\subset A^{(1)}$ and that all Fourier coefficients of $F$ are at most $s_1$. Combining these inequalities shows that $p\ll_S K^2$, as we claimed.
\end{proof}
\section{Polynomial bounds for the Chowla cosine problem}\label{sec:chowla1/12}
In this section, we provide a streamlined argument which establishes polynomial bounds for Chowla's cosine problem, proving Theorem~\ref{th:chowlapoly} with the slightly smaller exponent $1/12$. We suppose throughout this section that $A=-A\subset\mathbf{Z}\setminus\{0\}$ is a finite symmetric set of integers of size $n=|A|$, and that $K>0$ is a constant such that $\hat{1}_A(x)+K\geqslant 0$ for all $x\in \To$.
By Roth's Lemma~\ref{lem:roth}, we may suppose that $$\sum_{t\in A}|A\cap(A+t)|=\#\{(a,t)\in A^2:a-t\in A\}\geqslant \frac{n^2}{2K}$$ since otherwise
$n\leqslant (2K)^{2}$, which is stronger than the desired
conclusion. Hence, we can find a $t\neq 0$ such that $A_t:=A\cap (A+t)$ has size 
\begin{equation}\label{eq:A_tsize}
    |A_t|\geqslant n/(2K).
\end{equation} We remark that the existence of these large sets $A_t$ also plays a central role in the arguments of Roth, Bourgain and Ruzsa, though for a different reason.

\medskip

Consider a fixed $t\in \mathbf{Z}\setminus\{0\}$. The function $1+\sin(2\pi tx)$ is clearly nonnegative for all $x\in \To$. Together with the assumption that $\hat{1}_A(x)+K\geqslant 0$ for all $x\in \To$, this implies that $$(1+\sin(2\pi tx))(\hat{1}_A(x)+K)\geqslant 0.$$ As $|1+\sin(2\pi tx)|\leqslant 2$, this in turn shows that $$(1+\sin(2\pi tx))\hat{1}_A(x)+2K\geqslant 0, \quad \forall x\in \To.$$ We note that $\sin(2\pi tx)\hat{1}_A(x)=\frac{1}{2i}\hat{1}_{A+t}+\frac{-1}{2i}\hat{1}_{A-t}$ and hence we may rewrite this as
$$\hat{1}_A(x)+\frac{1}{2i}\cdot\hat{1}_{A+t}(x)+\frac{-1}{2i}\cdot\hat{1}_{A-t}(x)+2K\geqslant 0.$$ It is convenient to rescale the function in the previous inequality by a factor of 2 and define $f_t(x)\vcentcolon=2(1+\sin(2\pi tx))\widehat{1_A}(x)$, so we observe from the expression above that the Fourier coefficients $\widehat{f_t}(m)$ are given by \begin{align}\label{eq:Ffouriercoef}
    \widehat{f_t}(m)=\begin{cases}
        2&\text{if }m\in A\text{, and either }m\notin (A+t)\cup(A-t)\text{ or }m\in(A+t)\cap(A-t),\\
        2\pm \frac1{i}&\text{if }m\in A\text{, and } m\in(A\pm t)\setminus (A\mp t),\\
        \frac{\pm1}{i}&\text{if }m\notin A\text{, and }m\in(A\pm t)\setminus (A\mp t),\\
        0&\text{otherwise.}
    \end{cases}
\end{align}
Note also that we have shown that $\lVert f_t\rVert_{\min}\leqslant 4K$. Define $\lambda:=2-i$. The fact that $A$ is symmetric implies that $A_{-t}:=A\cap(A-t)=-A\cap -(A+t)=-A_t$, so we can rewrite \begin{align}\label{eq:fminbound}
    f_t(x)&=\lambda\cdot\hat{1}_{A_t\setminus -A_t}(x)+\overline{\lambda}\cdot\hat{1}_{-A_t\setminus A_t}(x)+2\cdot\hat{1}_{A\setminus (A_t\triangle -A_{t})}(x)\\ &-i\cdot\hat{1}_{(A+t)\setminus (A\cup(A-t))}(x)+i\cdot\hat{1}_{(A-t)\setminus (A\cup (A+t))}(x),\nonumber
\end{align}
where $A_t\triangle -A_{t}=(A_t\setminus -A_t)\cup(-A_t\setminus A_t)$ denotes the symmetric difference. What we have achieved by writing $f_t$ in this form is that the terms have disjoint Fourier spectra, meaning that the five sets $$A_t\setminus -A_t, -A_t\setminus A_t, A\setminus (A_t\triangle -A_{t}), (A+t)\setminus (A\cup(A-t)), (A-t)\setminus (A\cup (A+t))$$ are pairwise disjoint. We note that $\lambda=2-i$ is a constant (independent of $A$ and $t$) and that its precise value is unimportant for the purpose of getting \emph{some} polynomial bound for Chowla's cosine problem; we only require that $|\lambda|>2$ and that it has nonzero imaginary part. By using the inequality $(1-\sin(2\pi t x))\hat{1}_A(x)+2K\geqslant 0$ instead of $(1+\sin(2\pi t x))\hat{1}_A(x)+2K\geqslant 0$, we obtain the analogous result that $\lVert g_t\rVert_{\min}\leqslant 4K$ for the function 
\begin{align*}
    g_t(x)\vcentcolon=&\overline{\lambda}\cdot\hat{1}_{A_t\setminus -A_t}(x)+\lambda\cdot\hat{1}_{-A_t\setminus A_t}(x)+2\cdot\hat{1}_{A\setminus (A_t\triangle -A_{t})}(x)\\ &+i\cdot\hat{1}_{(A+t)\setminus (A\cup(A-t))}(x)-i\cdot\hat{1}_{(A-t)\setminus (A\cup (A+t))}(x).\nonumber
\end{align*}
Note that the Fourier coefficients of $f_t$ are the conjugates of those of $g_t$, i.e.~$\widehat{f_t}(m)=\overline{\widehat{g_t}(m)}$.
\begin{lemma}\label{lem:basictextra}
    Let $A\subset \mathbf{Z}\setminus\{0\}$ be a finite symmetric set. Suppose that there exists a constant $K>0$ such that $\hat{1}_A(x)+K\geqslant 0$ for all $x\in \To$. Let $t\in \mathbf{Z}\setminus\{0\}$. Then the functions $f_t,g_t$ satisfy $\lVert f_t\rVert_{\min},\lVert g_t\rVert_{\min}\leqslant 4K$. 
\end{lemma}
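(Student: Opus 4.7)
The plan is to formalise the multiplication argument that is already sketched in the discussion preceding the lemma. The key observation is that both factors $1 + \sin(2\pi t x)$ and $\hat{1}_A(x) + K$ are pointwise nonnegative on $\To$: the first because $\sin$ takes values in $[-1,1]$, and the second by the standing hypothesis. Their product is therefore nonnegative as well, giving
\[
(1+\sin(2\pi tx))\,\hat{1}_A(x) + K\bigl(1+\sin(2\pi tx)\bigr)\geqslant 0 \quad \text{for all } x\in \To.
\]
Since $1+\sin(2\pi tx)\leqslant 2$, the bounded term satisfies $K(1+\sin(2\pi tx))\leqslant 2K$, so I would conclude $(1+\sin(2\pi tx))\hat{1}_A(x)\geqslant -2K$ pointwise. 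Multiplying by $2$ gives $f_t(x)\geqslant -4K$, that is, $\lVert f_t\rVert_{\min}\leqslant 4K$, as required.

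For $g_t$, the argument is identical after replacing $1+\sin(2\pi tx)$ by $1-\sin(2\pi tx)$, which is also pointwise in $[0,2]$. The same two-step inequality then yields $2(1-\sin(2\pi tx))\hat{1}_A(x)\geqslant -4K$, and to finish I only need to check that this quantity coincides with the $g_t$ defined in the text. Using the elementary identity $\sin(2\pi tx)\hat{1}_A(x)=\tfrac{1}{2i}\bigl(\hat{1}_{A+t}(x)-\hat{1}_{A-t}(x)\bigr)$ one computes the Fourier coefficients of $g_t$ and verifies they are the complex conjugates of those of $f_t$ recorded in \eqref{eq:Ffouriercoef}; reorganising the frequencies by their membership pattern in $A,A+t,A-t$ (and using the symmetry $A=-A$ so that $A_{-t}=-A_t$) reproduces the five-term decomposition stated just before the lemma.

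There is no real obstacle: the lemma is essentially a direct consequence of the pointwise positivity of the two factors, combined with the uniform bound $\lVert 1\pm\sin(2\pi tx)\rVert_\infty\leqslant 2$. The only bookkeeping step is the Fourier-theoretic identification of $f_t$ and $g_t$ with the two explicit expressions in disjoint spectra, but for the sole purpose of establishing $\lVert f_t\rVert_{\min},\lVert g_t\rVert_{\min}\leqslant 4K$ this decomposition is not needed; the bound follows immediately from the definitions $f_t=2(1+\sin(2\pi tx))\hat{1}_A$ and $g_t=2(1-\sin(2\pi tx))\hat{1}_A$.
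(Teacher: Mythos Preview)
Your proof is correct and follows exactly the same line of reasoning as the paper: multiply the two nonnegative factors $1\pm\sin(2\pi tx)$ and $\hat{1}_A+K$, then use the uniform bound $0\leqslant 1\pm\sin(2\pi tx)\leqslant 2$ to absorb the $K$-term into a constant $2K$, and rescale by $2$. The additional Fourier bookkeeping you mention is indeed unnecessary for the bound itself, just as in the paper where the lemma is stated after the argument has already been carried out.
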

This leads us to the following proposition. In the proof, we will write $h^{(*m)}=\underbrace{h*\dots*h}_m$ for the $m$-fold convolution of a function $h$. It is also convenient to introduce some notation and write 
\begin{align}\label{eq:A_tB_tC_tdefi}
    A_t&:=A\cap(A+t),\\
    B_t&:=A_t\setminus-A_t=(A\cap (A+t))\setminus(A-t),\nonumber\\
    C_t&:= (A+t)\setminus (A\cup (A-t)),\nonumber\\
    D_t&:=A\setminus(A_t\triangle-A_t) .\nonumber
\end{align} Recall that $A_{-t}=A\cap (A-t)=-A_t$ as $A$ is symmetric, and similarly $B_{-t}=-B_t$ and $C_{-t}=-C_t$. The set $D_t$ is itself symmetric. This notation allows us to simplify the expression \eqref{eq:fminbound} for $f_t$ as follows:
\begin{align}\label{eq:f_tsimp}
     f_t(x)=&\lambda\cdot\hat{1}_{B_t}(x)+\overline{\lambda}\cdot\hat{1}_{-B_t}(x)+2\cdot\hat{1}_{D_t}(x)\\ &-i\cdot\hat{1}_{C_t}(x)+i\cdot\hat{1}_{-C_t}(x),\nonumber
\end{align}
and analogously for $g_t$. In the statement of the next proposition, it is helpful to recall that the sets $B_t,-B_t,D_t,C_t$ and $-C_t$ are pairwise disjoint, which can easily be checked from their definitions.
\begin{proposition}\label{prop:absoluteboundedextra}
    Let $A\subset \mathbf{Z}\setminus\{0\}$ be a finite symmetric set satisfying the inequality $\hat{1}_A(x)+K\geqslant 0$ for all $x\in \To$. Let $t\in\mathbf{Z}\setminus\{0\}$.     Then \begin{align*}
    &|11(\hat{1}_{B_t}-\hat{1}_{-B_t})(x)-(\hat{1}_{C_t}-\hat{1}_{-C_t})(x)| \\
    &\leqslant 8\cdot\hat{1}_{D_t}(x)+ 2(\hat{1}_{B_t}+\hat{1}_{-B_t})(x)+O(K^{3}),\quad \forall x\in \To.
    \end{align*}
\end{proposition}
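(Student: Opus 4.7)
The plan is to apply Lemma~\ref{lem:Kconv} to the three-fold convolutions $f_t^{(*3)}$ and $g_t^{(*3)}$, exploiting the algebraic identity $(2-i)^3 = 2-11i$, which is precisely where the constant $11$ appearing in the conclusion comes from. Since $\widehat{f_t}(0)=0$ (so every iterated convolution power $f_t^{(*k)}$ has vanishing mean), two successive applications of Lemma~\ref{lem:Kconv}, combined with the bound $\lVert f_t\rVert_{\min}\leqslant 4K$ from Lemma~\ref{lem:basictextra}, yield
$$\lVert f_t^{(*3)}\rVert_{\min} \leqslant \lVert f_t\rVert_{\min}^3 \leqslant (4K)^3 = 64K^3,$$
and analogously $\lVert g_t^{(*3)}\rVert_{\min}\leqslant 64K^3$.

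Next I compute $f_t^{(*3)}$ explicitly on the Fourier side. From~\eqref{eq:f_tsimp} and the pairwise disjointness of the five sets $B_t, -B_t, D_t, C_t, -C_t$, the coefficient $\widehat{f_t}(m)$ equals $\lambda = 2-i$, $\overline{\lambda} = 2+i$, $2$, $-i$, $i$ according as $m$ lies in $B_t, -B_t, D_t, C_t, -C_t$ (and vanishes otherwise). Since the Fourier coefficient of an $m$-fold convolution is the $m$-th power of the Fourier coefficient, cubing these entries yields $\lambda^3 = 2-11i$, $\overline{\lambda}^3 = 2+11i$, $2^3 = 8$, $(-i)^3 = i$, $i^3 = -i$, which re-assemble into
\begin{equation*}
f_t^{(*3)}(x) = 2\bigl(\hat{1}_{B_t}+\hat{1}_{-B_t}\bigr)(x) + 8\hat{1}_{D_t}(x) - i\Phi_t(x),
\end{equation*}
where $\Phi_t(x):=11(\hat{1}_{B_t}-\hat{1}_{-B_t})(x) - (\hat{1}_{C_t}-\hat{1}_{-C_t})(x)$ is exactly the quantity we wish to bound. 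The identical calculation for $g_t$, whose Fourier coefficients are the complex conjugates of those of $f_t$ on the same five sets, gives
\begin{equation*}
g_t^{(*3)}(x) = 2\bigl(\hat{1}_{B_t}+\hat{1}_{-B_t}\bigr)(x) + 8\hat{1}_{D_t}(x) + i\Phi_t(x).
\end{equation*}

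Finally, each difference $\hat{1}_E-\hat{1}_{-E}$ is purely imaginary since $\overline{e(mx)} = e(-mx)$, so $\Phi_t(x)$ is purely imaginary at every $x\in\To$; consequently $\pm i\Phi_t$ are real-valued and $|i\Phi_t(x)| = |\Phi_t(x)|$. Rewriting the pointwise bounds $f_t^{(*3)}(x)\geqslant -64K^3$ and $g_t^{(*3)}(x)\geqslant -64K^3$ as, respectively, a lower and upper bound on the real-valued function $-i\Phi_t(x)$, and combining the two, gives
\begin{equation*}
|\Phi_t(x)|\leqslant 2\bigl(\hat{1}_{B_t}+\hat{1}_{-B_t}\bigr)(x) + 8\hat{1}_{D_t}(x) + 64K^3,
\end{equation*}
which is the desired inequality. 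The one conceptual observation driving the proof is the choice $\lambda=2-i$, specifically tuned so that $\lambda^3$ has real part $2$ (matching the acceptable positive error term $2(\hat{1}_{B_t}+\hat{1}_{-B_t})$ on the right) while acquiring a large imaginary part $\pm 11$ that is attached precisely to the quantity being bounded. No serious obstacle is anticipated beyond getting the Fourier arithmetic right.
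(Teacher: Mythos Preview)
Your proof is correct and follows essentially the same route as the paper: both arguments compute $f_t^{(*3)}$ and $g_t^{(*3)}$ on the Fourier side via the identity $(2-i)^3=2-11i$, invoke Lemmas~\ref{lem:basictextra} and~\ref{lem:Kconv} to bound $\lVert f_t^{(*3)}\rVert_{\min},\lVert g_t^{(*3)}\rVert_{\min}\ll K^3$, and then combine the two one-sided inequalities (which differ only in the sign of the purely imaginary term) into the desired two-sided bound. Your explicit verification that $\widehat{f_t}(0)=0$ so that Lemma~\ref{lem:Kconv} applies is a welcome detail the paper leaves implicit.
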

\begin{rmk*}
    As will become clear later, the exact shape of various terms in this inequality is not too important. The one crucial feature that we desire is that the Fourier coefficient of the term $\hat{1}_{B_t}$ on the left-hand side (which is 11) is strictly larger than all Fourier coefficients of the function on the right-hand side (except the constant term).
\end{rmk*}
\begin{proof}
It is easy to see that $\int_\To f_t(x)\,dx=\int_\To g_t(x)\,dx=0$, so by Lemmas \ref{lem:basictextra} and \ref{lem:Kconv} we obtain
\begin{align}\label{eq:f_tg_tmin}
    \lVert f_t^{(*3)}\rVert_{\min}\ll K^3,\\
    \lVert g_t^{(*3)}\rVert_{\min}\ll K^3.\nonumber
\end{align} From \eqref{eq:f_tsimp}, and as convolving two functions corresponds to multiplying their Fourier coefficients, we see that these convolutions have the following explicit expressions: 
\begin{align*}
     f_t^{(*3)}(x)=&\lambda^3\cdot\hat{1}_{B_t}(x)+\overline{\lambda}^3\cdot\hat{1}_{-B_t}(x)+8\cdot\hat{1}_{D_t}(x)\\ &+i\cdot\hat{1}_{C_t}(x)-i\cdot\hat{1}_{-C_t}(x),\nonumber\\
     g_t^{(*3)}(x)=&\overline{\lambda}^3\cdot\hat{1}_{B_t}(x)+\lambda^3\cdot\hat{1}_{-B_t}(x)+8\cdot\hat{1}_{D_t}(x)\\ &-i\cdot\hat{1}_{C_t}(x)+i\cdot\hat{1}_{-C_t}(x).\nonumber
\end{align*} Note that $\lambda^3=(2-i)^3=2-11i$ and $\overline{\lambda}^3=2+11i$. Let us use this to rewrite our expressions for $f_t^{(*3)}$ and $g_t^{(*3)}$, so that upon recalling \eqref{eq:f_tg_tmin}, we obtain the new inequalities
\begin{align*}
     &-11i(\hat{1}_{B_t}-\hat{1}_{-B_t})(x)+2(\hat{1}_{B_t}+\hat{1}_{-B_t})(x)+8\cdot\hat{1}_{D_t}(x)\\ &+i(\hat{1}_{C_t}-\hat{1}_{-C_t})(x)\geqslant -O(K^3), \quad \forall x\in \To
\end{align*}
and
\begin{align*}
     &11i(\hat{1}_{B_t}-\hat{1}_{-B_t})(x)+2(\hat{1}_{B_t}+\hat{1}_{-B_t})(x)+8\cdot\hat{1}_{D_t}(x)\\ &-i(\hat{1}_{C_t}-\hat{1}_{-C_t})(x)\geqslant -O(K^3), \quad \forall x\in \To.
\end{align*}
Note that these two inequalities have precisely the same terms on their left-hand sides, except that the sign of the term $11i(\hat{1}_{B_t}-\hat{1}_{-B_t})-i(\hat{1}_{C_t}-\hat{1}_{-C_t})$ is flipped. Observe also that this term is a real-valued function on $\To$, as it equals $-2\cdot\Im\left(11\cdot\hat{1}_{B_t}-\hat{1}_{C_t}\right) $. Hence, they combine to show the desired inequality that 
\begin{align*}
    &|11(\hat{1}_{B_t}-\hat{1}_{-B_t})(x)-(\hat{1}_{C_t}-\hat{1}_{-C_t})(x)|
    \leqslant 8\cdot\hat{1}_{D_t}(x)+ 2(\hat{1}_{B_t}+\hat{1}_{-B_t})(x)+O(K^{3}),\quad \forall x\in \To.
    \end{align*}
\end{proof}

The inequality in Proposition~\ref{prop:absoluteboundedextra} provides rather strong information, and we will see that if $B_t=A_t\setminus-A_t$ is sufficiently large, then it can be used to obtain a polynomial bound for $K$. Recall from \eqref{eq:A_tsize} that the assumption that $\hat{1}_A(x)+K\geqslant0$ allows us to find a $t\neq0$ so that $A_t=A\cap(A+t)$ has size $|A_t|\geqslant n/(2K)$. For Proposition~\ref{prop:absoluteboundedextra}, we need to upgrade this to the stronger conclusion that the set $B_t=A_t\setminus-A_t=(A\cap (A+t))\setminus(A-t)$ is large.
\begin{lemma}\label{prop:B_tlarge}
Let $A'\subset\mathbf{Z}\setminus\{0\}$ be a symmetric finite set, and suppose that every arithmetic progression $P$ which is contained in $A'$ has size at most $L$. Let $A'_t:=A'\cap (A'+t)$, where $t$ is a nonzero integer. Then $B'_t:=A'_t\setminus-A'_t$ has size $|B'_t|\geqslant |A'_t|/L$.         
\end{lemma}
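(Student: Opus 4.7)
First I unwind the definitions: $A'_t=A'\cap(A'+t)=\{a\in A':a-t\in A'\}$, and by symmetry of $A'$ one has $-A'_t = A'\cap (A'-t)=\{a\in A':a+t\in A'\}$, so
\[
B'_t=A'_t\setminus -A'_t=\{a\in A':a-t\in A',\ a+t\notin A'\}.
\]
The strategy is to build a map $\phi:A'_t\to B'_t$ that, starting from $x\in A'_t$, walks forward in steps of $t$ until it falls off of $A'$, and then to bound the fibre sizes using the hypothesis on arithmetic progressions in $A'$.

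Explicitly, for $x\in A'_t$ let $k(x):=\max\{k\geqslant 0:x,x+t,x+2t,\dots,x+kt\in A'\}$, which is finite because $A'$ is, and set $\phi(x):=x+k(x)t$. By construction $\phi(x)\in A'$ and $\phi(x)+t\notin A'$. Moreover $\phi(x)-t\in A'$: either because it is the penultimate term of the progression defining $k(x)$ (when $k(x)\geqslant 1$), or because it equals $x-t\in A'$ (when $k(x)=0$, using $x\in A'_t$). Hence $\phi(x)\in B'_t$ as required.

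The heart of the argument is the fibre count. For $y\in B'_t$, a preimage must be of the form $x=y-jt$ with $j\geqslant 0$. Since $y+t\notin A'$, the condition $\phi(x)=y$ amounts to asking $y,y-t,\dots,y-jt\in A'$, and the extra condition $x\in A'_t$ further requires $y-(j+1)t\in A'$. Thus preimages of $y$ correspond bijectively to integers $j\geqslant 0$ for which the $(j+2)$-term progression $y,y-t,\dots,y-(j+1)t$ lies entirely in $A'$. The hypothesis that every arithmetic progression in $A'$ has length at most $L$ forces $j+2\leqslant L$, giving $|\phi^{-1}(y)|\leqslant L-1\leqslant L$.

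Summing over $y\in B'_t$ yields $|A'_t|=\sum_{y\in B'_t}|\phi^{-1}(y)|\leqslant L\cdot |B'_t|$, which is exactly the desired inequality. The argument is genuinely elementary and there is no real obstacle; the only point that warrants care is verifying that the fibres really are parametrised by APs of length $j+2$ (so that the hypothesis applies with the right count), which is handled by keeping track of both the walk-up condition and the membership $x-t\in A'$ inherited from $x\in A'_t$.
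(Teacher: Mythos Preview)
Your proof is correct and is essentially a repackaging of the paper's argument: your map $\phi$ sends each $x\in A'_t$ to the top of the maximal $t$-progression in $A'$ containing $x$, and your fibre bound $|\phi^{-1}(y)|\leqslant L-1$ is exactly the paper's observation that each such maximal progression has length at most $L$ and contributes its maximum to $B'_t$. One cosmetic point: the lemma does not assume $A'$ finite, so the finiteness of $k(x)$ should be justified directly from the AP hypothesis (indeed $k(x)\leqslant L-1$), rather than from finiteness of $A'$.
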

\begin{proof}
    As $A'$ is symmetric, we may without loss of generality assume that $t>0$. First, we note that $-A'_t=-A'\cap-(A'+t)=A'\cap(A'-t)=A'_{-t}$ as $A'$ is symmetric. Now partition $A'=\bigsqcup_{i}P_i$ into the minimum possible number of arithmetic progressions with common difference $t$,  meaning that the $P_i\subset A'$ are arithmetic progressions with common difference $t$ for which $(\max P_i) +t\notin A'$ and $(\min P_i)-t\notin A'$. We will show that every progression $P_i$ which has size at least two contributes a unique element to $B'_t=A'_t\setminus -A'_t$. Indeed it is clear that $(\max P_i)\in A'_t$ as $(\max P_i),(\max P_i)-t\in A'$ when $P_i$ has size at least two, whereas the fact that $(\max P_i)+t\notin A'$ by our definition of the $P_i$ shows that $(\max P_i)\notin A'_{-t}=-A'_t$. Let $Q_1,\dots, Q_{r}$ be the collection of progressions $P_i$ which have size at least two, so we have just shown that 
\begin{equation}\label{eq:B_tL}
        |B'_t|\geqslant r.
    \end{equation} 
Note on the other hand that if $a\in A'_t=A'\cap(A'+t)$, then $a,a-t\in A'$ and hence every element of $A'_t$ is contained in one of these progressions $Q_j$ of size at least two. This provides the following lower bound for the total size of $\bigcup_jQ_j$: \begin{equation}\label{eq:Q_jsize}
\sum_{j=1}^r|Q_j|\geqslant |A'_t|.
\end{equation}
By assumption, every arithmetic progression $Q\subset A'$ has size $|Q|\leqslant L$, so $|Q_j|\leqslant L$ for all $j$. Combining this with \eqref{eq:B_tL} and \eqref{eq:Q_jsize} produces the inequality
$|B'_t|\geqslant r
    \geqslant \frac{1}{L}\sum_{j=1}^r |Q_j|\geqslant |A'_t|/L$.
\end{proof}
Ruzsa's Corollary \ref{cor:ruzsachowla} implies that, under the assumption that $\lVert \hat{1}_A\rVert_{\min}\leqslant K$, the largest progression contained in $A$ has size $O(K^2)$. By \eqref{eq:A_tsize}, we can find a $t\neq0$ so that $A_t=A\cap(A+t)$ has size $|A_t|\geqslant n/(2K)$. Hence, Lemma~\ref{prop:B_tlarge} shows that $B_t=A_t\setminus-A_t$ is also rather large:
\begin{equation}\label{eq:B_tlarge}
    |B_t|\gg \frac n{K^3}.
\end{equation} We then apply Proposition~\ref{prop:absoluteboundedextra} with this $t$ to deduce that \begin{align}\label{eq:1_Bbound}
    &|11(\hat{1}_{B_t}-\hat{1}_{-B_t})(x)-(\hat{1}_{C_t}-\hat{1}_{-C_t})(x)| \\
    &\leqslant 8\cdot\hat{1}_{D_t}(x)+ 2(\hat{1}_{B_t}+\hat{1}_{-B_t})(x)+O(K^{3}),\quad \forall x\in \To.\nonumber
\end{align}
The next proposition allows us to produce a lower bound for $K$ from these two facts. We have stated it in a rather general form, as we shall need it again to prove Theorem~\ref{th:generalcoefintro} in Section~\ref{sec:generalS}.
\begin{proposition}\label{prop:h*htrick}
    Let $P_1,P_2\in L^2(\mathbf{T})$, let $B\subset\mathbf{Z}$, and let $c,L>0$ be constants. Suppose that $|P_1(x)|\leqslant P_2(x)+L$ holds for almost all $x\in \To$, and that
    \begin{align}\label{eq:P_jcoef}
\begin{alignedat}{2}
    \Re \,\widehat{P_1}(b)   &\geqslant 1+c,   &\quad& \forall b\in B,\\
    |\widehat{P_2}(m)| &\leqslant 1,     &\quad& \forall m\in\mathbf{Z}.
\end{alignedat}
\end{align}
    Then $L\geqslant c|B|/\lVert\hat{1}_B\rVert_1^2$.
\end{proposition}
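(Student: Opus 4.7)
My plan is to integrate the pointwise hypothesis against a carefully chosen nonnegative test function, namely $\psi := |\hat{1}_B| * |\hat{1}_B|$, where the convolution is taken on $\To$. This $\psi$ has three key features which I will exploit. First, integrating a convolution factorises the total mass: $\int_\To \psi = \lVert\hat{1}_B\rVert_1^2$, which is exactly the denominator appearing in the target bound. Second, the idempotence $\hat{1}_B * \hat{1}_B = \hat{1}_B$ (Fourier convolution multiplies coefficients, and $1_B$ is idempotent) combined with the triangle inequality for convolutions yields the pointwise domination
\[|\hat{1}_B(x)| = |(\hat{1}_B * \hat{1}_B)(x)| \leq (|\hat{1}_B|*|\hat{1}_B|)(x) = \psi(x).\]
Third, $\widehat{\psi}(m) = \widehat{|\hat{1}_B|}(m)^2$, and Parseval applied to $|\hat{1}_B|$ gives $\sum_m |\widehat{|\hat{1}_B|}(m)|^2 = \lVert |\hat{1}_B|\rVert_2^2 = |B|$.

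Multiplying the hypothesis $|P_1(x)|\leq P_2(x)+L$ by $\psi(x)\geq 0$ and integrating yields
\[\int_\To \psi|P_1| \;\leq\; \int_\To \psi P_2 + L\lVert\hat{1}_B\rVert_1^2.\]
For the left-hand side, the pointwise domination $\psi \geq |\hat{1}_B|$ followed by the triangle inequality for integrals gives
\[\int_\To \psi|P_1| \;\geq\; \int_\To |\hat{1}_B||P_1| \;\geq\; \left|\int_\To \overline{\hat{1}_B}P_1\right| \;=\; \sum_{b\in B}\widehat{P_1}(b) \;\geq\; (1+c)|B|,\]
where the penultimate equality is Parseval and the final step uses that each $\widehat{P_1}(b)$ is a real number at least $1+c$. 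For the $P_2$ term, Parseval together with $|\widehat{P_2}(m)|\leq 1$ gives
\[\left|\int_\To \psi P_2\right| \;=\; \left|\sum_m \widehat{|\hat{1}_B|}(m)^2\,\widehat{P_2}(-m)\right| \;\leq\; \sum_m |\widehat{|\hat{1}_B|}(m)|^2 \;=\; |B|.\]
Substituting both estimates yields $(1+c)|B| \leq |B| + L\lVert\hat{1}_B\rVert_1^2$, which rearranges to the desired $L\geq c|B|/\lVert\hat{1}_B\rVert_1^2$.

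The main hurdle is really just identifying this test function $\psi$: the naive choice $\psi = |\hat{1}_B|$ only yields the weaker bound $L\geq c|B|/\lVert\hat{1}_B\rVert_1$. The improvement to $\lVert\hat{1}_B\rVert_1^2$ comes from convolving $|\hat{1}_B|$ with itself, calibrated precisely so that $\int \psi$ matches the target denominator while the Parseval identity $\sum_m |\widehat{|\hat{1}_B|}(m)|^2 = |B|$ controls the contribution of $P_2$, circumventing the fact that we have no a priori bound on $\lVert P_2\rVert_\infty$ or on the Fourier support of $P_2$.
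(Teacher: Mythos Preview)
Your proof is correct and follows essentially the same approach as the paper: both arguments use the test function $\psi = |\hat{1}_B|*|\hat{1}_B|$, exploit the idempotence $\hat{1}_B*\hat{1}_B=\hat{1}_B$ to get $|\hat{1}_B|\leqslant\psi$, and apply Parseval on both sides with $\sum_m|\widehat{|\hat{1}_B|}(m)|^2=|B|$ controlling the $P_2$ contribution. The only cosmetic difference is the order of the steps (the paper starts from $\int P_1\overline{\hat{1}_B}$ and then passes to $\psi$, whereas you integrate against $\psi$ from the outset); your closing remark that the naive choice $\psi=|\hat{1}_B|$ would give $L\geqslant c|B|/\lVert\hat{1}_B\rVert_1$ is not quite right, since that choice leaves no good control on $\int|\hat{1}_B|P_2$, but this is commentary and does not affect the argument.
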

\begin{proof}
For notational convenience, we write $h(x):=|\hat{1}_{B}(x)|$ and note that $h\in L^\infty(\mathbf{T})$ as the assumptions of the proposition imply that $B$ is finite. Observe that by Parseval, 
\begin{align}\label{eq:hparseval}
    \sum_m |\hat{h}(m)|^2=\int_\To |h(x)|^2\,dx = \int_\To |\hat{1}_{B}(x)|^2\,dx= |B|.
\end{align} Since all Fourier coefficients of $\hat{1}_{B}$ are $0$ or $1$, we note that $(\hat{1}_{B}*\hat{1}_{B})(x)=\hat{1}_{B}(x)$. This implies the useful fact that $|\hat{1}_{B}(x)|\leqslant (h*h)(x)$ holds for all $x\in \To$, because 
\begin{equation}\label{eq:absoconv}
|\hat{1}_{B}(x)|=|\hat{1}_{B}*\hat{1}_{B}(x)|\leqslant (|\hat{1}_{B}|*|\hat{1}_{B}|)(x)=(h*h)(x).
\end{equation}
The first assumption in \eqref{eq:P_jcoef} and Parseval show that
\begin{align*}
    (1+c)|B|&\leqslant\Re\sum_{b\in B}\widehat{P_1}(b)\\
    &=\Re\int_\To P_1(x)\overline{\hat{1}_{B}(x)}\,dx.
\end{align*}
As $|P_1|\leqslant P_2+L$ a.e., we obtain the inequality
\begin{align*}
    (1+c)|B|\leqslant \int_\To \left(P_2(x)+L\right)|\hat{1}_{B}(x)|\,dx.\nonumber
\end{align*}
Note that both factors in the integrand are nonnegative functions; that the first term is nonnegative follows by the assumption that $|P_1|\leqslant P_2+L$. Using the observation \eqref{eq:absoconv} that $|\hat{1}_{B}(x)|\leqslant (h*h)(x)$ therefore allows us to get the upper bound
\begin{align}\label{eq:topolyboundextra}
    (1+c)|B|&\leqslant \int_\To P_2(x)(h*h)(x)\,dx+ L\int_\To (h*h)(x)\,dx.
\end{align}
For the contribution of the first term in \eqref{eq:topolyboundextra}, which we call $T_1$, we may use Parseval and the second assumption in \eqref{eq:P_jcoef} that $|\widehat{P_2}(m)|\leqslant 1$ for all $m$ to obtain the bound
\begin{align}
T_1&=\sum_{m\in \mathbf{Z}}\hat{h}(m)^2\widehat{P_2}(-m)\nonumber\\
&\leqslant \sum_{m\in \mathbf{Z}}|\hat{h}(m)|^2=|B|,\label{eq:T_1}
\end{align} where the final equality is \eqref{eq:hparseval}. To bound the contribution of the second term in \eqref{eq:topolyboundextra}, which we call $T_2$, we simply note that 
\begin{equation*}\label{eq:T_2}
    T_2= L\int_\To (h*h)(x)\,dx= L\left(\int_\To h(x)\,dx\right)^2=L\lVert \hat{1}_{B}\rVert_1^2.
\end{equation*}    
Finally, we may plug this estimate for $T_2$ and the estimate \eqref{eq:T_1} for $T_1$ into \eqref{eq:topolyboundextra}, showing that
$$(1+c)|B|\leqslant |B|+L\lVert\hat{1}_B\rVert_1^2,$$
and hence, $L\geqslant c|B|/\lVert \hat{1}_B\rVert_1^2$.
\end{proof}

By \eqref{eq:1_Bbound}, we may apply Proposition~\ref{prop:h*htrick} with $P_1= \frac{1}{8}\left(11(\hat{1}_{B_t}-\hat{1}_{-B_t})-(\hat{1}_{C_t}-\hat{1}_{-C_t})\right)$, $P_2=\hat{1}_{D_t}+ \frac14(\hat{1}_{B_t}+\hat{1}_{-B_t})$, $B=B_t$, $c=\frac38$, and $L=O(K^{3})$. This yields the bound 
\begin{equation}\label{eq:Kafterh*h}
    K^3\gg \frac{|B_t|}{\lVert \hat{1}_{B_t}\rVert_1^2}\gg \frac{n}{K^3\lVert \hat{1}_{B_t}\rVert_1^2},
\end{equation} by using the lower bound \eqref{eq:B_tlarge} for the size of $B_t$.
The final step is to find a good bound for the $L^1$-norm of $\hat{1}_{B_t}$.
\begin{lemma}\label{lem:betterl1}
Let $A'\subset\mathbf{Z}\setminus\{0\}$ be a finite symmetric set. Let $t\in \mathbf{Z}\backslash \{0\}$, and write $A'_t:=A'\cap(A'+t)$ and $B'_t:=A'_t\setminus-A'_t$. Then 
$$\| \hat{1}_{B'_t}\|_1 \ll \lVert \hat{1}_{A'}\rVert_1^3.$$
\end{lemma}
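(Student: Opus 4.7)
The plan is to decompose $1_{B'_t}$ via a one-step inclusion–exclusion into a difference of indicators of intersections, convert each piece to an iterated convolution of $\hat{1}_{A'}$ and its translates using the identity $\hat{1}_{A\cap B}=\hat{1}_A*\hat{1}_B$, and then apply Young's convolution inequality.

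First I would observe that, since $A'_t\cap -A'_t\subseteq A'_t$, the indicator function decomposes as $1_{B'_t}=1_{A'_t}-1_{A'_t\cap -A'_t}$, so the triangle inequality in $L^1(\To)$ gives
$$\|\hat{1}_{B'_t}\|_1\leq\|\hat{1}_{A'_t}\|_1+\|\hat{1}_{A'_t\cap -A'_t}\|_1.$$
Using the symmetry of $A'$, we have $-A'_t=A'\cap(A'-t)$, hence $A'_t\cap -A'_t=A'\cap(A'+t)\cap(A'-t)$. Applying the identity $\hat{1}_{A\cap B}=\hat{1}_A*\hat{1}_B$ (iteratively) yields
$$\hat{1}_{A'_t}=\hat{1}_{A'}*\hat{1}_{A'+t},\qquad \hat{1}_{A'_t\cap -A'_t}=\hat{1}_{A'}*\hat{1}_{A'+t}*\hat{1}_{A'-t}.$$

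Next I would apply Young's convolution inequality \eqref{eq:Young} with $p=q=r=1$, combined with the observation that translating $A'$ by $\pm t$ only multiplies $\hat{1}_{A'}$ by the unit-modulus character $e(\pm tx)$, so $\|\hat{1}_{A'\pm t}\|_1=\|\hat{1}_{A'}\|_1$. This produces
$$\|\hat{1}_{A'_t}\|_1\leq\|\hat{1}_{A'}\|_1^2,\qquad \|\hat{1}_{A'_t\cap -A'_t}\|_1\leq\|\hat{1}_{A'}\|_1^3.$$
The elementary bound $\|\hat{1}_{A'}\|_1\geq 1$, which follows from H\"older's inequality (since $|A'|=\|\hat{1}_{A'}\|_2^2\leq\|\hat{1}_{A'}\|_1\|\hat{1}_{A'}\|_\infty=\|\hat{1}_{A'}\|_1\cdot|A'|$), then allows me to combine the two estimates into $\|\hat{1}_{B'_t}\|_1\leq\|\hat{1}_{A'}\|_1^2+\|\hat{1}_{A'}\|_1^3\leq 2\|\hat{1}_{A'}\|_1^3\ll\|\hat{1}_{A'}\|_1^3$, which is the desired bound.

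This is essentially a routine calculation with no real analytic obstacle. The only mildly non-trivial step is recognising that, despite $B'_t$ being defined as a set difference rather than a pure intersection, the single inclusion–exclusion $1_{B'_t}=1_{A'_t}-1_{A'_t\cap -A'_t}$ brings us back into the intersection regime, after which the convolution formula and Young's inequality finish the job.
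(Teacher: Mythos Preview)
Your proof is correct and in fact slightly more direct than the paper's. The paper first splits $\hat{1}_{B'_t}$ into its symmetric and antisymmetric parts $\frac12(\hat{1}_{B'_t}\pm\hat{1}_{-B'_t})$ and bounds each separately: the symmetric part via the identity $\hat{1}_{B'_t}+\hat{1}_{-B'_t}=\hat{1}_{A'_t}+\hat{1}_{A'_{-t}}-2\hat{1}_{A'\cap(A'+t)\cap(A'-t)}$, and the antisymmetric part via $\hat{1}_{B'_t}-\hat{1}_{-B'_t}=\hat{1}_{A'}*\big((e(t\cdot)-e(-t\cdot))\hat{1}_{A'}\big)$. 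Your single inclusion--exclusion $1_{B'_t}=1_{A'_t}-1_{A'_t\cap -A'_t}$ reaches the same conclusion with fewer terms and the same tools (the intersection--convolution identity and Young's inequality). The paper's detour is not wasted, however: the separate bounds on $\lVert\hat{1}_{B'_t}\pm\hat{1}_{-B'_t}\rVert_1$, and especially the sharper estimate $\lVert\hat{1}_{B'_t}-\hat{1}_{-B'_t}\rVert_1\ll\lVert\hat{1}_{A'}\rVert_1^2$, are reused in Section~\ref{sec:exponentvalue} (Lemmas~\ref{lem:Q_1Q_2} and~\ref{lem:betterL^1new}) to push the exponent from $1/12$ to $1/7$. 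For the lemma as stated, your argument is cleaner.
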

\begin{proof}
Let us begin by recalling that $-A'_t=A'_{-t}$ as $A'$ is symmetric, so $B'_t=(A'\cap(A'+t))\setminus(A'-t)$ and note that $B'_t$ is disjoint from $-B'_t=B'_{-t}$. By the triangle inequality, it suffices to prove the two bounds $\lVert \hat{1}_{B'_t}+\hat{1}_{-B'_t}\rVert_1,\lVert \hat{1}_{B'_t}-\hat{1}_{-B'_t}\rVert_1\ll \lVert \hat{1}_{A'}\rVert_1^3$. First, since $B'_t\cup -B'_t=\big(A'_t\cup A'_{-t}\big)\setminus A'\cap(A'+t)\cap (A'-t)$, we can write
\begin{equation}\label{eq:B_tcupB_-t}
\hat{1}_{B'_t}+\hat{1}_{-B'_{t}}= \hat{1}_{A'_t}+\hat{1}_{A'_{-t}}- 2\cdot\hat{1}_{A'\cap (A'+t)\cap (A'-t)}.
\end{equation}
As $A'_t=A'\cap(A'+t)$, we see that $\hat{1}_{A'_t}=\hat{1}_{A'}*\hat{1}_{A'+t}$, that $\hat{1}_{A'_{-t}}=\hat{1}_{A'}*\hat{1}_{A'-t}$, and that $\hat{1}_{A'\cap (A'+t)\cap (A'-t)}=\hat{1}_{A'}*\hat{1}_{A'+t}*\hat{1}_{A'-t}$. Note also that $\hat{1}_{A'+t}(x)=e(tx)\hat{1}_{A'}(x)$, so it is clear that $\lVert \hat{1}_{A'+t}\rVert_1=\lVert \hat{1}_{A'}\rVert_1$. Hence, Young's convolution inequality \eqref{eq:Young} gives the bound $\lVert \hat{1}_{A'_t}\rVert_1\leqslant\lVert\hat{1}_{A'}\rVert_1\times \lVert\hat{1}_{A'+t}\rVert_1=\lVert\hat{1}_{A'}\rVert_1^2$. Similarly, the other two functions $\hat{1}_{A'_{-t}}$ and $\hat{1}_{A'\cap (A'+t)\cap(A'-t)}$ have $L^1$-norm at most $\lVert\hat{1}_{A'}\rVert_1^3$, and hence so does \eqref{eq:B_tcupB_-t}.

\medskip

Next, to estimate $\lVert \hat{1}_{B'_t}-\hat{1}_{-B'_t}\rVert_1$, we observe that 
\begin{equation}\label{eq:B_t-B_-t}
   \hat{1}_{B'_t}-\hat{1}_{-B'_t}=\hat{1}_{A'}*\big(\hat{1}_{A'+t\setminus A'-t}-\hat{1}_{A'-t\setminus A'+t}\big). 
\end{equation}
Note that $\hat{1}_{A'+t\setminus A'-t}-\hat{1}_{A'-t\setminus A'+t}=(e(tx)-e(-tx))\hat{1}_{A'}$ and so has $L^1$-norm at most $2\lVert\hat{1}_{A'}\rVert_1$. Young's convolution inequality \eqref{eq:Young} then shows that
$$\lVert \hat{1}_{B'_t}-\hat{1}_{-B'_t}\rVert_1\leqslant \lVert \hat{1}_{A'}\rVert_1\times\lVert \hat{1}_{A'+t\setminus A'-t}-\hat{1}_{A'-t\setminus A'+t}\rVert_1\leqslant 2\lVert \hat{1}_{A'}\rVert_1^2.$$

\end{proof}
As $\lVert \hat{1}_A\rVert_1\ll K$ by Lemma~\ref{lem:mintoL^1}, applying Lemma~\ref{lem:betterl1} with $A'=A$ shows that $\lVert \hat{1}_{B_t}\rVert_1\ll K^3$. We may now plug this $L^1$-bound into \eqref{eq:Kafterh*h} to see
that $K^3\gg n/K^9$. So $K\gg n^{1/12}$, completing the proof of Theorem~\ref{th:chowlapoly} with the exponent $1/12$. We will improve this exponent to $1/5-o(1)$ in Section \ref{sec:exponentvalue}.

\section{One-sided estimates for cosine polynomials with general coefficients}\label{sec:generalS}

The purpose of this section is to prove Theorem~\ref{th:generalcoefintro}, establishing polynomial bounds for $K_S(n)$ whenever $S\subset\mathbf{R}\setminus\{0\}$ is a finite set of coefficients. The argument is quite similar to that of Theorem~\ref{th:chowlapoly}, though more involved. We restate the theorem here for the reader's convenience.
\begin{theorem}\label{th:generalcoef}
    Let $S\subset\mathbf{R}\setminus\{0\}$ be finite. Then there exist two constants $c_S,c'_S>0$ such that the following holds. For every symmetric set $A\subset\mathbf{Z}\setminus\{0\}$ of size $n=|A|$, and every choice of coefficients $s_a\in S$ satisfying $s_a=s_{-a}$ for all $a\in A$, we have that \begin{equation}\label{eq:generalcoef}
        \min_x \sum_{a\in A}s_ae(ax)\leqslant -c'_Sn^{c_S}.
    \end{equation}
\end{theorem}
\begin{proof}
    We shall prove this theorem by induction on the size of $S$. First, we note that Theorem~\ref{th:generalcoef} holds when $S=\{s\}$ is a singleton. Indeed, this is trivial when $s<0$ by evaluating at $x=0$, and when $s>0$ we may apply Theorem~\ref{th:chowlapoly}.

\medskip

By rescaling $S$ (and correspondingly scaling the final value of $c'_S$), we may assume that $\max_{s\in S}|s|=1$. We claim that, without loss of generality, we may additionally assume that $S=\{s_1>s_2>\dots>s_k\}\subset (0,\infty)$ consists of positive numbers which satisfy $s_1=1$ and $s_j<\frac{1}{1000}$ (say) for $2\leqslant j\leqslant k$. To see this, observe that if $A$ is a symmetric set $A\subset\mathbf{Z}\setminus\{0\}$ of size $n=|A|$ and $z_a\in S$ are some coefficients satisfying $z_a=z_{-a}$ for all $a\in A$ such that $$\sum_{a\in A}z_ae(ax)\geqslant -K,\quad\forall x\in\To,$$ then taking the $2\ell$-fold convolution and applying Lemma~\ref{lem:Kconv} shows that
\begin{equation}\label{eq:squareconv}
    \sum_{a\in A}z_a^{2\ell}e(ax)\geqslant -K^{2\ell},\quad \forall x\in \To.
\end{equation} Hence, we have shown that if we write $T:=S^{2\ell}=\{s^{2\ell}:s\in S\}$, then $S^{2\ell}$ is a set of at most $|S|$ positive numbers satisfying \begin{equation}\label{eq:Ksquareconv}
    K_{S}(n)\geqslant K_{S^{2\ell}}(n)^{1/(2\ell)}.
\end{equation} Since $\max_{s\in S}|s|=1$ and $S$ is finite, we may choose $\ell= O_S(1)$ such that $\max_{t\in T}t=1$ and all other elements of $T=S^{2\ell}$ are positive reals of size at most $\frac{1}{1000}$. We will show that $K_T(n)\gg_T n^{\Omega_T(1)}$ for sets $T$ satisfying these additional assumptions, and note that as $\ell=O_S(1)$, \eqref{eq:Ksquareconv} then implies the desired conclusion that $K_{S}(n)\gg_{S}n^{\Omega_{S}(1)}$ for arbitrary finite sets $S$.

\medskip

So let us now consider a fixed finite set $S=\{s_1=1>s_2>\dots>s_k\}\subset(0,1]$ of size $k\geqslant 2$ and for which $s_j<\frac{1}{1000}$ for all $2\leqslant j\leqslant k$. We may assume that the conclusion of Theorem~\ref{th:generalcoef} holds for all sets $S'$ of size $|S'|<k$. Let $A$ be an arbitrary fixed symmetric set $A\subset\mathbf{Z}\setminus\{0\}$ of size $n=|A|$, and let $z_a\in S$ be some coefficients satisfying $z_a=z_{-a}$ for all $a\in A$, such that $$F(x):=\sum_{a\in A}z_ae(ax)\geqslant -K, \quad\forall x\in \To.$$ Our goal is to show that $K\geqslant c'_Sn^{c_S}$. We begin by noting that we may partition $A=\bigsqcup_{j=1}^kA^{(j)}$ into disjoint symmetric sets such that
\begin{align}\label{eq:generalK}
    F(x)=\sum_{j=1}^ks_j\hat{1}_{A^{(j)}}(x)\geqslant -K.
\end{align}
\begin{claim}\label{cl:largeA_1}
    Without loss of generality, we may assume that $|A^{(1)}|\gg_Sn^{\Omega_S(1)}$.
\end{claim}
\begin{proof}[Proof of Claim \ref{cl:largeA_1}]
    By applying the induction hypothesis with the set $S':=S\setminus \{s_1\}$, we find that $$\min_x \sum_{j=2}^ks_j\hat{1}_{A^{(j)}}(x)\leqslant -c'_{S'}\left(n-|A^{(1)}|\right)^{c_{S'}}.$$ Hence, using a trivial upper bound for $s_1\hat{1}_{A^{(1)}}$ shows (recall that $s_1=1$): $$-K\leqslant\min_x F(x)\leqslant |A^{(1)}|-c'_{S'}\left(n-|A^{(1)}|\right)^{c_{S'}}.$$ So either $|A^{(1)}|\geqslant \frac{c'_{S'}}{100}n^{c_{S'}}$ which implies the claim, or else this immediately gives the desired lower bound $K\geqslant \frac{c'_{S'}}{2}n^{c_{S'}}\gg_Sn^{\Omega_S(1)}$.
\end{proof}

As in the previous section, we define two auxiliary functions 
\begin{align}\label{eq:f_tg_tdefi}
    F_t(x):=2(1+\sin(2\pi tx))F(x),\\
    G_t(x):=2(1-\sin(2\pi tx)) F(x),\nonumber
\end{align} where $t$ is some nonzero integer to be chosen later. The fact that $1\pm\sin(2\pi tx)$ is pointwise nonnegative implies the following lemma, analogous to Lemma~\ref{lem:basictextra}.
\begin{lemma}\label{lem:F_tG_tmin}
    For any $t\in\mathbf{Z}\setminus\{0\}$, the functions $F_t,G_t$ in \eqref{eq:f_tg_tdefi} satisfy $\lVert F_t\rVert_{\min},\lVert G_t\rVert_{\min}\leqslant 4K$.
\end{lemma}
As in \eqref{eq:Ffouriercoef}, the value of the Fourier coefficients $\widehat{F_t}(m),\widehat{G_t}(m)$ depends on which of the sets $A^{(1)},\dots, A^{(k)},\mathbf{Z}\setminus A$ each of $m,m-t,m+t$ lie in:
\begin{align}\label{eq:F_tG_tcoef}
    \widehat{F_t}(m)=2\sum_{j=1}^ks_j1_{A^{(j)}}(m)-i\sum_{j=1}^ks_j1_{A^{(j)}}(m-t)+i\sum_{j=1}^ks_j1_{A^{(j)}}(m+t)\\
    \widehat{G_t}(m)=2\sum_{j=1}^ks_j1_{A^{(j)}}(m)+i\sum_{j=1}^ks_j1_{A^{(j)}}(m-t)-i\sum_{j=1}^ks_j1_{A^{(j)}}(m+t).\nonumber
\end{align}
Let us write $\lambda_m:=\widehat{F_t}(m)$ for the Fourier coefficients of $F_t$, so note that $\widehat{G_t}(m)=\overline{\lambda_m}$ and that $\lambda_m=\overline{\lambda_{-m}}$ (as $F_t$ is real-valued) for all $m\in\mathbf{Z}$. We define the sets
\begin{align}\label{eq:A_tB_tC_tdefiextra}
    A^{(1)}_t&:=A^{(1)}\cap(A^{(1)}+t),\\
    B^{(1)}_t&:=A^{(1)}_t\setminus-A^{(1)}_t=(A^{(1)}\cap (A^{(1)}+t))\setminus(A^{(1)}-t),\nonumber\\
    C^{(1)}_t&:= (A^{(1)}+t)\setminus (A^{(1)}\cup (A^{(1)}-t)),\nonumber\\
    D^{(1)}_t&:= A^{(1)}\setminus(A^{(1)}_t\triangle-A^{(1)}_t).\nonumber
\end{align} 
which are the exact analogues of \eqref{eq:A_tB_tC_tdefi}, except that these are defined in terms of $A^{(1)}$, rather than the full Fourier spectrum $A$ of $F$.
Hence, unlike in \eqref{eq:f_tsimp}, the sets $\pm B^{(1)}_t,\pm C^{(1)}_t$ and $D^{(1)}_t$ do not necessarily cover the full Fourier spectrum of $F_t,G_t$, and we define $E_t:= (A\cup(A+t)\cup(A-t))\setminus ((\pm B^{(1)}_t)\cup(\pm C^{(1)}_t)\cup D_t^{(1)})$ to be the remaining part of the Fourier support of $F_t,G_t$. 
\begin{claim}\label{cl:lambda_m}
 There exist complex numbers $\varepsilon_m\in\mathbf{C}$, satisfying $|\varepsilon_m|\leqslant \frac{1}{100}$ for all $m\in\mathbf{Z}$, such that
 \begin{align}\label{eq:lambda_m}
     \lambda_m=\begin{cases}
         2\mp i+\varepsilon_m&\text{ if $m\in \pm B^{(1)}_t$,}\\
         2+\varepsilon_m&\text{ if $m\in D^{(1)}_t$,}\\
         \mp i+\varepsilon_m&\text{ if $m\in \pm C^{(1)}_t$,}\\
         \varepsilon_m &\text{ if $m\in  E_t$.}
     \end{cases}
 \end{align}
\end{claim}
\begin{proof}[Proof of Claim \ref{cl:lambda_m}]
This can be seen from \eqref{eq:F_tG_tcoef} as follows, upon recalling that $s_1=1$ while $|s_j|<1/1000$ for $2\leqslant j\leqslant k$. From the definitions \eqref{eq:A_tB_tC_tdefiextra}, it is clear that the three terms $2\cdot1_{A^{(1)}}(m),-i\cdot 1_{A^{(1)}}(m-t)$ and $i\cdot 1_{A^{(1)}}(m+t)$ in \eqref{eq:F_tG_tcoef} coming from $A^{(1)}$ contribute precisely the claimed `main term' to $\lambda_m$ in \eqref{eq:lambda_m} depending on whether $m$ lies in $\pm B^{(1)}_t,\pm C^{(1)}_t,D^{(1)}_t$ or $E_t$. Each $\varepsilon_m$ accounts for the contribution from the sets $A^{(j)},j\in[2,k]$ and so is a sum of at most three terms of the form $2s_{j},is_{j'},-is_{j''}$ with $j,j',j''\in[2,k]$, which each have size at most $2/1000$. Hence, each $\varepsilon_m$ certainly has size at most $1/100$.

\end{proof}
Define the real numbers $\rho_m,\sigma_m$ by $\rho_m-i\sigma_m:= \lambda_m^3$ for all $m\in\mathbf{Z}$. We now prove a generalised version of Proposition~\ref{prop:absoluteboundedextra}.
\begin{claim}\label{cl:absoluteboundedS}
    We have that \begin{align*}
        \left|\sum_{m\in\mathbf{Z}}\sigma_me(mx)\right|\leqslant \sum_{m\in\mathbf{Z}}\rho_me(mx)+O(K^3),\quad\forall x\in\To.
    \end{align*}
\end{claim}
\begin{proof}[Proof of Claim \ref{cl:absoluteboundedS}]
    By Lemmas \ref{lem:F_tG_tmin} and \ref{lem:Kconv}, we obtain
\begin{align}\label{eq:f_tg_tminS}
    \lVert F_t^{(*3)}\rVert_{\min}\ll K^3,\\
    \lVert G_t^{(*3)}\rVert_{\min}\ll K^3.\nonumber
\end{align} As we defined $\lambda_m=\widehat{F_t}(m)=\overline{\widehat{G_t}(m)}$, we see that these convolutions have the following explicit expressions: 
\begin{align*}
     F_t^{(*3)}(x)=&\sum_{m\in\mathbf{Z}}\lambda_m^3e(mx),\nonumber\\
     G_t^{(*3)}(x)=&\sum_{m\in\mathbf{Z}}\overline{\lambda_m}^3e(mx).\nonumber
\end{align*} Recall that $\lambda_m^3=\rho_m-i\sigma_m$ by definition, and hence $\overline{\lambda_m}^3=\rho_m+i\sigma_m$. Let us use this to rewrite our expressions for $F_t^{(*3)}$ and $G_t^{(*3)}$, so that upon recalling \eqref{eq:f_tg_tminS}, we have shown that
\begin{align*}
     &\sum_m\rho_me(mx)-i\sum_m\sigma_me(mx)\geqslant -O(K^3), \quad \forall x\in \To
\end{align*}
and
\begin{align*}
     &\sum_m\rho_me(mx)+i\sum_m\sigma_me(mx)\geqslant -O(K^3), \quad \forall x\in \To.
\end{align*}
Note that these two inequalities have precisely the same terms on their left-hand sides, except that the sign of the term $i\sum_m\sigma_me(mx)$ is flipped. Observe also that this term is a real-valued function on $\To$, because $\lambda_m=\overline{\lambda_{-m}}$ for all $m$ and hence $\sigma_{-m}:=-\Im(\lambda_{-m}^3)=\Im(\lambda^3_m)=-\sigma_m$. Hence, they combine to show the desired inequality that 
\begin{align*}
    \left|\sum_{m\in\mathbf{Z}}\sigma_me(mx)\right|\leqslant \sum_{m\in\mathbf{Z}}\rho_me(mx)+O(K^3),\quad\forall x\in\To.
    \end{align*}
\end{proof}
To apply Proposition~\ref{prop:h*htrick}, we need to check that some of the Fourier coefficients $\sigma_m$ are strictly larger than $\max_m|\rho_m|$. By cubing the expressions from Claim \ref{cl:lambda_m}, we observe that 
\begin{align*}\label{eq:lambda_m^3}
     \lambda_m^3=\begin{cases}
         2\mp 11i+\delta_m&\text{ if $m\in \pm B^{(1)}_t$,}\\
         8+\delta_m&\text{ if $m\in D^{(1)}_t$,}\\
         \pm i+\delta_m&\text{ if $m\in \pm C^{(1)}_t$,}\\
         \delta_m &\text{ if $m\in  E_t$,}
     \end{cases}
 \end{align*}
 for some complex numbers $\delta_m$ of size $|\delta_m|<1/2$ (say). Hence, from the fact that $\lambda_m^3=\rho_m-i\sigma_m$, it is clear that $\sigma_m\geqslant 10$ for all $m\in B_t^{(1)}$, while $|\rho_m|\leqslant 9$ for all $m\in\mathbf{Z}$. We also just showed in Claim \ref{cl:absoluteboundedS} that
 \begin{align*}
        \left|\sum_{m\in\mathbf{Z}}\sigma_me(mx)\right|\leqslant \sum_{m\in\mathbf{Z}}\rho_me(mx)+O(K^3),\quad\forall x\in\To.
    \end{align*}
These two ingredients allow us to apply Proposition~\ref{prop:h*htrick} with $P_1=\frac{1}{9}\sum_{m\in\mathbf{Z}}\sigma_me(mx)$, $P_2=\frac19\sum_{m\in\mathbf{Z}}\rho_me(mx)$, $B=B_t^{(1)}$, $c=1/9$, and $L=O(K^3)$. This yields the bound
\begin{equation}\label{eq:Kafterh*hS}
    K^3\gg\frac{|B_t^{(1)}|}{\left\lVert \hat{1}_{B_t^{(1)}}\right\rVert_1^2}.
\end{equation}
The final step in the proof of Theorem~\ref{th:generalcoef} is to show that we may pick a nonzero $t$ such that $|B^{(1)}_t|\geqslant n^{\Omega_S(1)}/K^{O_S(1)}$ and $\left\lVert \hat{1}_{B_t^{(1)}}\right\rVert_1\leqslant K^{O_S(1)}$. We first need to obtain an $L^1$-bound for $\hat{1}_{A^{(1)}}$.
\begin{claim}\label{cl:B^{(1)}L^1}
     We have that $\lVert \hat{1}_{A^{(1)}}\rVert_1\leqslant K^{O_S(1)}$.
\end{claim}
\begin{proof}[Proof of Claim \ref{cl:B^{(1)}L^1}]
Recall that we started our proof with the assumption \eqref{eq:generalK} that $\lVert F\rVert_{\min}\ll K$, where $F(x)=\sum_{j=1}^{|S|}s_j\hat{1}_{A^{(j)}}(x)$. By Lemma~\ref{lem:Kconv}, this implies that $\lVert F^{(*\ell)}\rVert_{\min}\ll_S K^{|S|}$ for all $1\leqslant \ell\leqslant |S|$. Lemma~\ref{lem:mintoL^1} then produces the $L^1$-bounds $\lVert F^{(*\ell)}\rVert_1\ll_S K^{|S|}$ for these convolutions of $F$. These convolutions have the following Fourier series:
$$F^{(*\ell)}(x)=\sum_{j=1}^{|S|}s_j^{\ell}\hat{1}_{A^{(j)}}(x).$$ The invertibility of the Vandermonde matrix $(s_j^\ell)_{j,\ell\in[|S|]}$ shows that we may find some real numbers $c_\ell,\ell\in[|S|]$, which clearly also have size $c_\ell=O_S(1)$, such that $$\sum_{\ell=1}^{|S|}c_\ell s_j^\ell=\begin{cases}
    1&\text{ if $j=1$}\\
    0&\text{ if $2\leqslant j\leqslant |S|$.}
\end{cases}$$
Hence, $\hat{1}_{A^{(1)}}= \sum_{\ell=1}^{|S|}c_\ell F^{(*\ell)}$ can be expressed as a linear combination of the convolutions $F^{(*\ell)},\ell\in [|S|]$, which each have $L^1$-norm at most $K^{O_S(1)}$. This confirms our claim that $\lVert \hat{1}_{A^{(1)}}\rVert_1\leqslant K^{O_S(1)}$.
    
\end{proof}
By plugging this $L^1$-bound from Claim \ref{cl:B^{(1)}L^1} into Lemma~\ref{lem:betterl1} (applied with $A'=A^{(1)}$), we deduce that $\lVert\hat{1}_{B_t^{(1)}}\rVert_1\ll \lVert \hat{1}_{A^{(1)}}\rVert_1^3\ll_SK^{O_S(1)}$ for every nonzero $t$. Thus, combining this with \eqref{eq:Kafterh*hS}, we have shown that 
\begin{equation}\label{eq:generalSfinalK}
K^{O_S(1)}\gg |B_t^{(1)}|,
\end{equation}
for any nonzero $t$.
So, to finish our proof of Theorem~\ref{th:generalcoef}, it only remains to find some nonzero $t$ for which $|B_t^{(1)}|\gg_Sn^{\Omega_S(1)}/K^{O_S(1)}$, as this combines with the previous inequality to show the desired bound $K\gg_Sn^{\Omega_S(1)}$. We achieve this final task in the next lemma. Recall that by Claim \ref{cl:largeA_1}, we may indeed assume that $|A^{(1)}|\gg_Sn^{\Omega_S(1)}$.
\begin{lemma}\label{lem:largeB_tS}
    Let $F=\sum_{j=1}^{|S|}s_j\hat{1}_{A^{(j)}}$ satisfy \eqref{eq:generalK}, and suppose that $|A^{(1)}|\gg_Sn^{\Omega_S(1)}$. Then there exists a nonzero $t\in\mathbf{Z}$ such that $|B^{(1)}_t|\gg_Sn^{\Omega_S(1)}/K^{O_S(1)}$.
\end{lemma}
\begin{proof}[Proof of Lemma~\ref{lem:largeB_tS}]
    H\"older's inequality shows that $\lVert \hat{1}_{A^{(1)}}\rVert_4^{2/3}\lVert\hat{1}_{A^{(1)}}\rVert_1^{1/3}\geqslant \lVert \hat{1}_{A^{(1)}}\rVert_2$. Now by Parseval, $\lVert\hat{1}_{A^{(1)}}\rVert_2=|{A^{(1)}}|^{1/2}$. Parseval also shows that $$\lVert\hat{1}_{A^{(1)}}\rVert_4^4=\int_\To(\hat{1}_{A^{(1)}})^2(\hat{1}_{-A^{(1)}})^2\,dx=\#\{x_1,x_2,x_3,x_4\in A^{(1)}:x_1-x_2=x_3-x_4\}.$$
    Using the $L^1$-bound $\lVert\hat{1}_{A^{(1)}}\rVert_1\ll K^{|S|}$ from Claim \ref{cl:B^{(1)}L^1} then yields
    $$\#\{x_1,x_2,x_3,x_4\in A^{(1)}:x_1-x_2=x_3-x_4\}\geqslant \frac{\lVert\hat{1}_{A^{(1)}}\rVert_2^6}{\lVert\hat{1}_{A^{(1)}}\rVert_1^2}\gg \frac{|{A^{(1)}}|^{3}}{K^{O_S(1)}}.$$
    The quantity on the left-hand side is the additive energy of $A^{(1)}$, and a standard computation reveals that 
    \begin{align*}
        &\#\{x_1,x_2,x_3,x_4\in A^{(1)}:x_1-x_2=x_3-x_4\}\\
        &=\sum_{t\in\mathbf{Z}}|A^{(1)}\cap(t+A^{(1)})|^2\\
        &\leqslant |A^{(1)}|^2+\left(\max_{t\neq 0}|A^{(1)}\cap(t+A^{(1)})|\right)\sum_m|A^{(1)}\cap(m+A^{(1)})|.
    \end{align*} As $\sum_m|A^{(1)}\cap(m+A^{(1)})|=|A^{(1)}|^2$, the previous two inequalities show that we may find some nonzero $t$ such that $A^{(1)}_t=A^{(1)}\cap(t+A^{(1)})$ has size
    \begin{equation}\label{eq:A1tlarge}
        |A^{(1)}_t|\gg \frac{|A^{(1)}|}{K^{O_S(1)}}.
    \end{equation}
    Lemma~\ref{lem:ruzsachowlaS} implies that the largest arithmetic progression contained inside $A^{(1)}$ has size $O_S(K^2)$. Hence, we may apply Lemma~\ref{prop:B_tlarge} with the set $A'=A^{(1)}$ to deduce that \begin{align*}
        |B_t^{(1)}|&\gg_S\frac{|A^{(1)}_t|}{K^2}\\
        &\gg |A^{(1)}|/K^{O_S(1)}\\
        &\gg_S n^{\Omega_S(1)}/K^{O_S(1)},
    \end{align*}
    where the second inequality is \eqref{eq:A1tlarge}, and the third follows from our assumption that $|A^{(1)}|\gg_S n^{\Omega_S(1)}$. This finishes the proof of the lemma.
\end{proof}
So by Lemma~\ref{lem:largeB_tS}, we can find a nonzero $t\in\mathbf{Z}$ such that $|B^{(1)}_t|\gg_Sn^{\Omega_S(1)}/K^{O_S(1)}$. Plugging this into \eqref{eq:generalSfinalK} shows that $K\gg_S n^{\Omega_S(1)}$, completing the proof of Theorem~\ref{th:generalcoef}.
\end{proof}
\section{An improved exponent}\label{sec:exponentvalue}
We put some further effort into improving the value of the exponent that our method produces for Chowla's cosine problem. Suppose throughout this section that $A=-A\subset\mathbf{Z}\setminus\{0\}$ is a finite symmetric set of integers of size $n=|A|$, and that $K>0$ is a constant such that
$$
\hat{1}_A(x)+K\geqslant 0,\quad \forall x\in\To.
$$
We shall reuse many of the ideas and notation from Section~\ref{sec:chowla1/12}, and the reader may wish to recall the definitions \eqref{eq:A_tB_tC_tdefi} of the sets $A_t,B_t,C_t,D_t$. We begin by recording a lemma which provides stronger information about $\hat{1}_{B_t}$ than the simple $L^1$-bound from Lemma~\ref{lem:betterl1}. The idea behind this lemma is partially inspired by the method of Bourgain \cite{Bourgainchowla1}.

\begin{lemma}\label{lem:Q_1Q_2}
    Let $A'\subset\mathbf{Z}\setminus\{0\}$ be a finite symmetric set such that $\hat{1}_{A'}+L\geqslant 0$. Let $t\in \mathbf{Z}\backslash \{0\}$, and write $A'_t:=A'\cap(A'+t)$ and $B'_t:=A'_t\setminus-A'_t$. Then we can write
$$ \hat{1}_{B'_t}(x)= Q_1(x)+Q_2(x),$$
where $Q_1,Q_2:\mathbf{T}\to\mathbf{C}$ satisfy $|Q_1(x)|\leqslant 4(\hat{1}_{A'}(x)+L)$ for all $x\in\mathbf{T}$, and $\lVert Q_2\rVert_2\ll L^2$.
\end{lemma}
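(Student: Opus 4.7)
The plan is to make the nonnegative function $G(x) := \hat{1}_{A'}(x) + L$ the basic building block, and then split $\hat{1}_{B'_t}$ into a piece dominated pointwise by $G$ and a small $L^\infty$ remainder. The starting point is the set identity $B'_t = A'_t \setminus (-A'_t)$, which at the Fourier level gives
$\hat{1}_{B'_t} = \hat{1}_{A'} * \hat{1}_{A'+t} - \hat{1}_{A'} * \hat{1}_{A'+t} * \hat{1}_{A'-t}$.
I then substitute $\hat{1}_{A'\pm t}(x) = G(x) e(\pm tx) - L e(\pm tx)$ everywhere and multiply out both convolutions.

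Each expansion has a distinguished ``main'' term, namely $a := G * (G\,e(t\cdot))$ in the first and $b := G * (G\,e(t\cdot)) * (G\,e(-t\cdot))$ in the second. Every other term carries at least one factor of $L$ multiplying a pure exponential $e(\pm t\cdot)$; since $\|G\|_1 = L$ and $\|e(\pm t\cdot)\|_\infty = 1$, Young's convolution inequality bounds each such term by $O(L^3)$ in $L^\infty$. Several vanish outright because $1 * e(\pm t\cdot) = 0$ for $t \neq 0$, which cleans things up considerably. So the computation collapses to $\hat{1}_{B'_t} = a - b + E$ with $\|E\|_\infty \ll L^3$.

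The pointwise control on $a$ and $b$ comes from the trivial observation $|G(x)\,e(\pm tx)| = G(x)$: this yields $|a(x)| \leq (G*G)(x)$ and $|b(x)| \leq (G*G*G)(x)$. A direct Fourier computation, using that $0 \notin A'$, gives $G*G = \hat{1}_{A'} + L^2 = G + L^2 - L$ and $G*G*G = G + L^3 - L$, so $|a| \leq G + L^2$ and $|b| \leq G + L^3$ pointwise. I finish with an elementary pointwise truncation: define $a_1(x) := a(x) \min(1, G(x)/|a(x)|)$ so that $|a_1| \leq G$ everywhere and $a_2 := a - a_1$ satisfies $\|a_2\|_\infty \leq L^2$; do the analogous thing for $b$. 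Then setting $Q_1 := a_1 - b_1$ gives $|Q_1| \leq 2G \leq 4G$, while $Q_2 := (a_2 - b_2) + E$ has $\|Q_2\|_\infty \ll L^3$.

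I don't foresee any real obstacle. The step that most needs care is the eight-term expansion of the triple convolution and the verification that each residual term is $O(L^3)$ in $L^\infty$; this is pure bookkeeping once one notes that $1 * e(\pm t \cdot) = 0$ eliminates most of the clutter, leaving only a handful of terms to which Young's inequality applies directly.
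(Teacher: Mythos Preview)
Your proof is correct and follows essentially the same strategy as the paper: express $\hat{1}_{B'_t}$ via convolutions of shifts of $\hat{1}_{A'}$, expand, bound the main term pointwise by an iterated self-convolution of $\hat{1}_{A'}+L$, and then truncate. Your choice to decompose $\hat{1}_{A'}=G-L$ (rather than the paper's $\hat{1}_{A'}=(\hat{1}_{A'})^{+}+(\hat{1}_{A'})^{-}$) and to work directly with the identity $\hat{1}_{B'_t}=\hat{1}_{A'_t}-\hat{1}_{A'_t\cap A'_{-t}}$ (rather than first splitting into $\hat{1}_{B'_t}\pm\hat{1}_{-B'_t}$) streamlines the bookkeeping somewhat.
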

\begin{rmk*}
This lemma is stronger than Lemma~\ref{lem:betterl1} in that it provides not only an $L^1$-bound for $\hat{1}_{B_t}$, but it also shows that almost all of the $L^2$-mass of $\hat{1}_{B_t}$ comes from a function $Q_1$ whose $L^1$-norm is even smaller, i.e. of size $O(K)$.
\end{rmk*}
\begin{proof}
As $\hat{1}_{B'_t}=\frac{1}{2}(\hat{1}_{B'_t}+\hat{1}_{-B'_t})+\frac12(\hat{1}_{B'_t}-\hat{1}_{-B'_t})$, it suffices to show that
\begin{align}\label{eq:Rdefi}
    \hat{1}_{B'_t}+\hat{1}_{-B'_t}&= R_1+R_2\\
    \hat{1}_{B'_t}-\hat{1}_{-B'_t}&=S_1+S_2,\label{eq:Sdefi}
\end{align} where $|R_1(x)|,|S_1(x)|\leqslant 4(\hat{1}_{A'}(x)+L)$, and $\lVert R_2\rVert_2,\lVert S_2\rVert_2\ll L^2$. The assumption that $\hat{1}_{A'}+L\geqslant 0$ implies that we may write $\hat{1}_{A'}(x)=T_1(x)+T_2(x)$ where $T_1(x)=\max(\hat{1}_{A'},0)$ satisfies $|T_1(x)|\leqslant \hat{1}_{A'}(x)+L$, and $T_2(x)=\min(\hat{1}_{A'},0)$. In particular, this implies that $\lVert T_1\rVert_1\leqslant \int_\To (\hat{1}_A+L)\,dx=L$ and that $\lVert T_2\rVert_\infty\leqslant L$. Exactly as in \eqref{eq:B_tcupB_-t}, we have that
\begin{equation}\label{eq:B_tcupB_-tnew}
\hat{1}_{B'_t}+\hat{1}_{-B'_{t}}= \widehat{1}_{A'_t}+\widehat{1}_{A'_{-t}}- 2\cdot\widehat{1}_{A'\cap (A'+t)\cap (A'-t)}.
\end{equation}
Note that $\hat{1}_{A'_t}=\hat{1}_{A'}*\hat{1}_{A'+t}$, that $\hat{1}_{A'_{-t}}=\hat{1}_{A'}*\hat{1}_{A'-t}$, and that $\hat{1}_{A'\cap (A'+t)\cap (A'-t)}=\hat{1}_{A'}*\hat{1}_{A'+t}*\hat{1}_{A'-t}$. Hence, as $\hat{1}_{A'+t}(x)=e(tx)\hat{1}_{A'}(x)$, we see that $\hat{1}_{A'_t}=(T_1+T_2)*(e(t\cdot)T_1+e(t\cdot)T_2)$. We can bound $$|T_1*(e(t\cdot)T_1)|\leqslant |T_1|*|T_1|\leqslant (\hat{1}_{A'}+L)*(\hat{1}_{A'}+L)=\hat{1}_{A'}+L^2,$$ and hence it is clear that we may split $T_1*(e(t\cdot)T_1)=U+V$ where $|U(x)|\leqslant \hat{1}_{A'}(x)+L$ and $\lVert V\rVert_\infty\ll L^2$. Young's convolution inequality \eqref{eq:Young} shows that the three remaining convolutions $T_1*[e(t\cdot)T_2], T_2*[e(t\cdot) T_1]$ and $ T_2*[e(t\cdot )T_2]$ have $L^\infty$-norm at most $L^2$; for example, $\lVert T_1*(e(t\cdot)T_2)\rVert_\infty\leqslant \lVert T_1\rVert_1\lVert T_2\rVert_\infty\leqslant L^2$. This shows that we may write $$\hat{1}_{A'_t}=R'_1+R'_2$$
where $R'_1:=U$ satisfies $|R'_1(x)|\leqslant \hat{1}_{A'}(x)+L$, and $R'_2$ consists of $V$ and the remaining three convolutions, so $\lVert R'_2\rVert_\infty \ll L^2$. A very similar argument provides an analogous expression $\hat{1}_{A'_{-t}}=R''_1+R''_2$ where $|R''_1(x)|\leqslant \hat{1}_{A'}(x)+L$ and $\lVert R''_2\rVert_\infty\ll L^2$. 

\medskip

To deal with the remaining term $\widehat{1}_{A'\cap (A'+t)\cap (A'-t)}$ in \eqref{eq:B_tcupB_-tnew}, we begin by using the above results to write
\begin{align*}
    \widehat{1}_{A'\cap (A'+t)\cap (A'-t)}&=\hat{1}_{A'}*\hat{1}_{A'+t}*\hat{1}_{A'-t}\\
    &=\hat{1}_{A'_t}*\big[e(-t\cdot)\hat{1}_{A'}\big]\\
    &=(R'_1+R'_2)*\big[e(-t\cdot)\hat{1}_{A'}]\\
    &=R'_1*\big[e(-t\cdot)T_1+e(-t\cdot)T_2\big]+R'_2*\hat{1}_{A'-t},
\end{align*}
where we  recall that $|R'_1(x)|\leqslant \hat{1}_{A'}(x)+L$ and $\lVert R'_2\rVert_\infty \ll L^2$, and that we write $T_1=\max(\hat{1}_{A'},0), T_2=\min(\hat{1}_{A'},0)$. Similarly to before, we may then bound $$|R'_1*(e(-t\cdot)T_1)|\leqslant \big[\hat{1}_{A'}+L\big]*|T_1|\leqslant (\hat{1}_{A'}+L)*(\hat{1}_{A'}+L)=\hat{1}_{A'}+L^2,$$ so that we may again split $R'_1*(e(-t\cdot)T_1)=u(x)+v(x)$ where $|u(x)|\leqslant \hat{1}_{A'}(x)+L$ and $\lVert v\rVert_\infty\ll L^2$. Young's inequality shows that $\lVert R'_1*(e(-t\cdot)T_2)\rVert_\infty \leqslant \lVert R'_1\rVert_1\lVert e(-t\cdot)T_2\rVert_\infty\ll L^2$. Finally, we obtain a strong $L^2$-bound for $R'_2*\hat{1}_{A'-t}$:
$$ \lVert R'_2*\hat{1}_{A'-t}\rVert_2^2=\sum_{m\in A'-t}|\widehat{R'_2}(m)|^2\leqslant \lVert R'_2\rVert_2^2\leqslant \lVert R'_2\rVert_\infty^2\ll L^4$$. Hence, we can write $\widehat{1}_{A'\cap (A'+t)\cap (A'-t)}=R'''_1+R'''_2$ where $R'''_1=u(x)$ satisfies $|R'''_1|\leqslant \hat{1}_{A'}+L$, and $R'''_2= v +R'_1*(e(-t\cdot)T_2)+ R'_2*\hat{1}_{A'-t}$ satisfies $\lVert R'''_2\rVert_2\ll L^2$. So we have found the desired functions in \eqref{eq:Rdefi} by taking $R_1:=R'_1+R''_1-2R'''_1$ and $R_2:=R'_2+R''_2-2R'''_2$.

\bigskip

The argument obtaining the expression \eqref{eq:Sdefi} for $ \hat{1}_{B'_t}-\hat{1}_{-B'_t}$ is again very similar, and this time based on the identity \eqref{eq:B_t-B_-t}: $\hat{1}_{B'_t}-\hat{1}_{-B'_t}=\hat{1}_{A'}*\big(\hat{1}_{A'+t\setminus A'-t}-\hat{1}_{A'-t\setminus A'+t}\big)$. 
Note that 
\begin{align*}
\hat{1}_{A'+t\setminus A'-t}-\hat{1}_{A'-t\setminus A'+t}&=(e(tx)-e(-tx))\hat{1}_{A'}\\
&=(e(tx)-e(-tx))T_1(x)+(e(tx)-e(-tx))T_2(x)
\end{align*}
and so this function can be written as $S'_1+S'_2$ where $|S'_1(x)|\leqslant 2|T_1(x)|\leqslant2(\hat{1}_{A'}+L)$, and $\lVert S'_2\rVert_\infty\leqslant 2L$. Convolving this expression $\hat{1}_{A'+t\setminus A'-t}-\hat{1}_{A'-t\setminus A'+t}=S'_1+S'_2$ with $\hat{1}_{A'}=T_1+T_2$ and arguing as above yields the desired expression \eqref{eq:Sdefi} for $\hat{1}_{B'_t}-\hat{1}_{-B'_t}$.
\end{proof}

We now replace the argument based on taking a triple convolution in
Proposition~\ref{prop:absoluteboundedextra} by a version that requires only two convolutions. The details of this are more involved, but morally follow the same strategy. Fix $t\neq0$, write $a_m=1_A(m)$, and recall the
function $f_t(x)=2(1+\sin(2\pi tx))\hat{1}_A(x)$.
Its Fourier coefficients are
\begin{equation}\label{eq:ztdef}
    \widehat{f_t}(m)
       =2a_m-ia_{m-t}+ia_{m+t}.
\end{equation}
In Lemma \ref{lem:basictextra} we showed
that $f_t\geqslant-4K$. As $\widehat{f_t}(0)=\int_\To f_t=0$,
an application of Lemma \ref{lem:Kconv} gives the inequality
\begin{equation}\label{eq:ftsquarepositive}
    f_t*f_t+16K^2
      =(f_t+4K)*(f_t+4K)
      \geqslant0.
\end{equation}
Unlike the argument in Section \ref{sec:chowla1/12}, we again have to multiply this inequality by a suitable nonnegative factor, this time of the form $1-\cos(2\pi t x + \pi/4)$. So define
$$
    r_t(x)
      =\bigl(1-\cos(2\pi tx+\pi/4)\bigr)(f_t*f_t)(x),
$$
and let
$$
    \psi_t(x)=\frac{r_t(x)+r_t(-x)}2,
    \qquad
    \phi_t(x)=\frac{r_t(x)-r_t(-x)}2
$$
be the even and odd parts of $r_t$, respectively. By \eqref{eq:ftsquarepositive}, we have $r_t(x)\geqslant-32K^2$, and trivially the same lower bound holds for $r_t(-x)$. In combination, as $\psi_t$ is even and $\phi_t$ is odd, these show that $\psi_t(x)\pm \phi_t(x)+32K^2\geqslant 0$ for all $x$. Hence, as $\phi_t$ is real-valued, we deduce the seemingly stronger pointwise inequality
\begin{equation}\label{eq:quadraticmajorant}
    |\phi_t(x)|
       \leqslant
       \psi_t(x)+O(K^2),
    \qquad \forall x\in\To.
\end{equation}
Let
$$
    \rho_m:=\widehat{r_t}(m).
$$
Since
$$
    1-\cos(2\pi tx+\pi/4)
      =
      1-\frac{1+i}{2\sqrt2}e(tx)
       -\frac{1-i}{2\sqrt2}e(-tx),
$$
we see from the definition of $r_t$ that
\begin{equation}\label{eq:rhomdef}
    \rho_m
      =
      \widehat{f_t}(m)^2
      -\frac{1+i}{2\sqrt2}\widehat{f_t}(m-t)^2
      -\frac{1-i}{2\sqrt2}\widehat{f_t}(m+t)^2.
\end{equation}
As $\psi,\phi_t$ are real-valued and even and odd, respectively, and $\psi_t+\phi_t=r_t$, their Fourier coefficients can be found by taking the real and imaginary parts of those of $r_t$:
\begin{equation}\label{eq:psiphicoefficients}
    \widehat{\psi_t}(m)=\Re\rho_m,
    \qquad
    \widehat{\phi_t}(m)=i\Im\rho_m.
\end{equation}
To make use of the inequality \eqref{eq:quadraticmajorant}, we need to find bounds for the Fourier coefficients of $\psi_t$ and $\phi_t$. This is accomplished in the next lemma, whose proof is an explicit but somewhat involved calculation.
\begin{lemma}\label{lem:quadraticcoefficients}
For every $m\in\mathbf{Z}$,
$$
    \widehat{\psi_t}(m)
       \leqslant4+\frac1{\sqrt2},
    \qquad
    |\widehat{\phi_t}(m)|
       \leqslant4+2\sqrt2.
$$
Moreover, if $m\in B_t$, then $-\Im\rho_m\geqslant4+\sqrt2$.
\end{lemma}

\begin{proof}
We begin by observing that by \eqref{eq:ztdef} and \eqref{eq:rhomdef}, \begin{align}
    \rho_m&:=\widehat{r_t}(m)\nonumber\\
    &=
      \widehat{f_t}(m)^2
      -\frac{1+i}{2\sqrt2}\widehat{f_t}(m-t)^2
      -\frac{1-i}{2\sqrt2}\widehat{f_t}(m+t)^2\nonumber\\
      &=(2a_m-ia_{m-t}+ia_{m+t})^2-\frac{1+i}{2\sqrt2}(2a_{m-t}-ia_{m-2t}+ia_{m})^2
      -\frac{1-i}{2\sqrt2}(2a_{m+t}-ia_{m}+ia_{m+2t})^2,\label{eq:rhomexpa}
\end{align} where we recall that $a_m=1_A(m)$. The value of $\rho_m$ therefore depends only on the five numbers
$$
    a_{m-2t},a_{m-t},a_m,a_{m+t},a_{m+2t}\in\{0,1\}.
$$
For $\alpha,\beta,\gamma\in\{0,1\}^3$, we can simplify
$$
    (2\beta-i\alpha+i\gamma)^2
      =
      4\beta-(\gamma-\alpha)^2
      +4i\beta(\gamma-\alpha),
$$
so every square occurring in \eqref{eq:rhomexpa} belongs to $\{0,-1,4,3+4i,3-4i\}$.
We first consider the case where $m\in B_t$. By definition, $B_t=(A+t)\cap A\setminus(A-t)$, so
$$
    (a_{m-t},a_m,a_{m+t})=(1,1,0),
$$
and hence
$$
    (2a_m-ia_{m-t}+ia_{m+t})^2=(2-i)^2=3-4i.
$$
The values of $a_{m-2t}$ and $a_{m+2t}$ are not determined by the assumption that $m\in B_t$. Writing $u=a_{m-2t}, v=a_{m+2t}$,
we can calculate the values of the other two squares in \eqref{eq:rhomexpa} for each of the four possible assignments of $(u,v)=(a_{m-2t},a_{m+2t})$:
\begin{align*}
    (2a_{m-t}-ia_{m-2t}+ia_m)^2
      &=
      \begin{cases}
          3+4i,&u=0,\\
          4,&u=1,
      \end{cases}\\
      (2a_{m+t}-ia_m+ia_{m+2t})^2
      &=
      \begin{cases}
          -1,&v=0,\\
          0,&v=1.
      \end{cases}
\end{align*}
Substituting these four possibilities into \eqref{eq:rhomexpa}
gives all possible values of $\rho_m$ when $m\in B_t$:
\begin{align*}
    \left.\rho_m\right|_{(u,v)=(0,0)}
      &=
      (3-4i)
      -\frac{1+i}{2\sqrt2}(3+4i)
      +\frac{1-i}{2\sqrt2}=
      3+\frac{\sqrt2}{2}-i(4+2\sqrt2),\\[1mm]
    \left.\rho_m\right|_{(u,v)=(0,1)}
      &=
      (3-4i)
      -\frac{1+i}{2\sqrt2}(3+4i)=
      3+\frac{\sqrt2}{4}
      -i\left(4+\frac{7\sqrt2}{4}\right),\\[1mm]
    \left.\rho_m\right|_{(u,v)=(1,0)}
      &=
      (3-4i)
      -\frac{1+i}{2\sqrt2}\,4
      +\frac{1-i}{2\sqrt2}=
      3-\frac{3\sqrt2}{4}
      -i\left(4+\frac{5\sqrt2}{4}\right),\\[1mm]
    \left.\rho_m\right|_{(u,v)=(1,1)}
      &=
      (3-4i)
      -\frac{1+i}{2\sqrt2}\,4=
      3-\sqrt2-i(4+\sqrt2).
\end{align*}
In particular, all four imaginary parts have the same sign, and we can simply observe that the following claimed bound indeed holds:
$$
    -\Im\rho_m\geqslant4+\sqrt2
    \qquad\text{for every }m\in B_t.
$$
It remains to prove the two uniform bounds on $\widehat{\psi_t}$ and $|\widehat{\phi_t}|$. This can again be done by a similar explicit (somewhat tedious) check, so we shall be brief. For each
fixed value of the triple
$$
    (a_{m-t},a_m,a_{m+t})\in\{0,1\}^3,
$$
one may perform the same calculation as above for each of the four choices of $(a_{m-2t},a_{m+2t})\in\{0,1\}^2$ to determine all possible values of $\rho_m=\widehat{r_t}(m)$. This lets us determine $\widehat{\psi_t}(m)=\Re \rho_m$ and $\widehat{\phi_t}(m)= i\Im \rho_m$.
Doing so produces the following table, where the maxima in the last two
columns are taken over these four choices $(a_{m-2t},a_{m+2t})\in\{0,1\}^2$:
\[
\begin{array}{c|c|c}
(a_{m-t},a_m,a_{m+t})
    &\max\Re\rho_m
    &\max|\Im\rho_m|\\ \hline
(0,0,0)&1/\sqrt2&\sqrt2/4\\
(0,0,1)&-1-3\sqrt2/4&5\sqrt2/4\\
(0,1,0)&4+1/\sqrt2&\sqrt2/4\\
(0,1,1)&3+1/\sqrt2&4+2\sqrt2\\
(1,0,0)&-1-3\sqrt2/4&5\sqrt2/4\\
(1,0,1)&-2\sqrt2&5\sqrt2/4\\
(1,1,0)&3+1/\sqrt2&4+2\sqrt2\\
(1,1,1)&4+1/\sqrt2&3\sqrt2/4.
\end{array}
\]
The largest entries in the final two columns give the desired bounds
$$
    \widehat{\psi_t}(m)
       \leqslant4+\frac1{\sqrt2},
    \qquad
    |\widehat{\phi_t}(m)|
       \leqslant4+2\sqrt2.
$$
\end{proof}

Now note that by this Lemma \ref{lem:quadraticcoefficients}, the inequality \eqref{eq:quadraticmajorant} has a function $\phi_t$ on the left-hand side whose Fourier coefficients on $B_t$ have size at least $4+\sqrt{2}$, which is strictly larger than any Fourier coefficient of the function $\psi_t$ on the right-hand side. This puts us in a very similar position as in Section \ref{sec:chowla1/12}, where a crucial inequality with similar features was obtained in Proposition \ref{prop:absoluteboundedextra}. In Section \ref{sec:chowla1/12}, our approach proceeded by applying Proposition \ref{prop:h*htrick}. This would work here too, but to prove the following quantitatively better result we use a slightly modified version of that argument, using the stronger input from Lemma \ref{lem:Q_1Q_2}.
\begin{proposition}\label{prop:B_tllK4}
Let $A\subset\mathbf{Z}\setminus\{0\}$ be a finite symmetric set such that $\hat{1}_{A}+K\geqslant 0$. For $t\in \mathbf{Z}\backslash \{0\}$, let $A_t:=A\cap(A+t)$ and $B_t:=A_t\setminus-A_t$. Then $|B_t|\ll K^4$ for every $t\in\mathbf{Z}\setminus\{0\}$.
\end{proposition}

\begin{proof}
Fix $t\neq0$ and write $B=B_t$. Apply
Lemma~\ref{lem:Q_1Q_2} with $A'=A$ and $L=K$, so that
$$
    \hat{1}_B=Q_1+Q_2,
$$
where
$$
    |Q_1|\leqslant4(\hat{1}_A+K),
    \qquad
    \lVert Q_2\rVert_2\ll K^2.
$$
As $\hat{1}_B(-x)=\overline{\hat{1}_B(x)}$, we may without loss of generality assume that $Q_1(-x)=\overline{Q_1(x)}$, by replacing $Q_j$ by $\frac12(Q_j(x)+\overline{Q_j(-x)})$ for $j=1,2$. Put
$$
    H(x)=|Q_1(x)|.
$$
Then $H$ is real-valued and even, so $\widehat H(m)$ is real for
every $m$. We also have
\begin{equation}\label{eq:Q1norms}
    \lVert Q_1\rVert_1\leqslant\int_\To 4(\hat{1}_A+K)\,dx=4K,
    \qquad
    \lVert Q_1\rVert_2=\lVert \hat{1}_B-Q_2\rVert_2
       \leqslant |B|^{1/2}+O(K^2).
\end{equation}
We test \eqref{eq:quadraticmajorant} against the pointwise
nonnegative function $H*H$. Let
$$
    X
      :=
      \int_\To
      (\psi_t(x)+O(K^2))(H*H)(x)\,dx.
$$
Using Parseval, the uniform bound $\widehat{\psi_t}(m)\leqslant 4+\frac{1}{\sqrt2}$ from Lemma~\ref{lem:quadraticcoefficients}, and
\eqref{eq:Q1norms}, we obtain the upper bound
\begin{align}
    X
    &\leqslant
      \left(4+\frac1{\sqrt2}\right)
      \sum_{m\in\mathbf{Z}}\widehat H(m)^2
      +O(K^2)\lVert H\rVert_1^2
      \nonumber\\
    &=
      \left(4+\frac1{\sqrt2}\right)
      \lVert Q_1\rVert_2^2+
    O(K^2)\lVert Q_1\rVert_1^2
      \nonumber\\
    &\leqslant
      \left(4+\frac1{\sqrt2}\right)|B|
      +O\bigl(K^2|B|^{1/2}+K^4\bigr).
      \label{eq:quadraticXupper}
\end{align}

On the other hand, \eqref{eq:quadraticmajorant} and the pointwise
inequality
$$
    H*H
      =
      |Q_1|*|Q_1|
      \geqslant
      |Q_1*Q_1|
$$
give the lower bound
$$
    X
      \geqslant
      \left|
      \int_\To
      \phi_t(x)(Q_1*Q_1)(x)\,dx
      \right|.
$$
Since $Q_1=\hat{1}_B-Q_2$
and
$$
    \hat{1}_B*\hat{1}_B=\hat{1}_B,
$$
we deduce that
$$
    Q_1*Q_1
      =
      \hat{1}_B
      -2\hat{1}_B*Q_2
      +Q_2*Q_2.
$$
By Parseval and the final assertion of
Lemma~\ref{lem:quadraticcoefficients},
\begin{align*}
    \left|
    \int_\To
    \phi_t(x)\hat{1}_B(x)\,dx
    \right|
    &=\left|\sum_{m\in B}\widehat{\phi_t}(-m)\right|\\
    &=\left|
    i\sum_{m\in B}(-\Im\rho_m)
    \right|
    \geqslant
    (4+\sqrt2)|B|.
\end{align*}
Using the uniform bound $|\widehat{\phi_t}(m)|\leqslant 4+2\sqrt2$ from
Lemma~\ref{lem:quadraticcoefficients}, we see that the other two terms satisfy
$$
    \left|
    \int_\To
    \phi_t(x)(\hat{1}_B*Q_2)(x)\,dx
    \right|\leqslant (4+2\sqrt{2})\sum_{m\in \mathbf{Z}}1_B(m)|\widehat{Q_2}(m)|
    \ll
    |B|^{1/2}\lVert Q_2\rVert_2
    \ll
    K^2|B|^{1/2}
$$ by Cauchy-Schwarz, and
$$
    \left|
    \int_\To
    \phi_t(x)(Q_2*Q_2)(x)\,dx
    \right|\leqslant (4+2\sqrt{2})\sum_{m\in \mathbf{Z}}|\widehat{Q_2}(m)|^2
    \ll
    \lVert Q_2\rVert_2^2
    \ll
    K^4.
$$
Thus, in total, we get the lower bound $X
      \geqslant
      (4+\sqrt2)|B|
      -O\bigl(K^2|B|^{1/2}+K^4\bigr)$.
Comparing this to the upper bound \eqref{eq:quadraticXupper}, 
we obtain
$$
    |B|
      \ll
      K^2|B|^{1/2}+K^4,
$$
which implies the desired
$|B|\ll K^4$.

\end{proof}

In order to use the previous proposition to obtain a good bound for $K$, it only remains to find a value of $t$ for which $B_t$ is large. Inequality \eqref{eq:B_tlarge} in Section \ref{sec:chowla1/12} shows that we may find a $t$ such that $|B_t|\gg n/K^3$, which combines with Proposition \ref{prop:B_tllK4} to show that $K\gg n^{1/7}$. To strengthen this to $K\gg n^{1/5-o(1)}$, we need the next lemma which allows us to find a $t$ for which the better bound $|B_t|\gg n^{1-o(1)}/K$ holds.

\begin{lemma}\label{lem:largeB_tnew}
    Let $A\subset \mathbf{Z}\setminus\{0\}$ be a finite symmetric set satisfying the inequality $\hat{1}_A(x)+K\geqslant 0$ for all $x\in \To$. Then there exists a $t\in\mathbf{Z}\setminus\{0\}$ such that $|B_t|\gg n/(K(\log n)^4)$.
\end{lemma}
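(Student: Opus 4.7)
The plan is to locate a nonzero $t$ such that simultaneously $|A_t| \gg n/K$ (a ``popular'' difference) and $L_t \ll (\log n)^4$, where $L_t$ denotes the maximum length of an AP contained in $A$ with common difference $t$. For such a $t$, Lemma~\ref{prop:B_tlarge} immediately yields $|B_t| \geq |A_t|/L_t \gg n/(K(\log n)^4)$, as required.

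The first ingredient is Roth's Lemma~\ref{lem:roth} applied with $B = A$, which provides $\sum_{t \in A \setminus \{0\}}|A_t| \gg n^2/K$; together with the trivial bound $|A_t| \leq n$, this produces at least $\Omega(n/K)$ popular differences $t \in A$ with $|A_t| \gg n/K$. The second ingredient is an elementary double-counting bound: each AP of length $\ell$ in $A$ contains $\ell - 1$ consecutive pairs in $A^2$, and each such consecutive pair uniquely determines the AP, so the number $N_\ell$ of APs of length $\ell$ in $A$ is at most $n^2/(\ell-1)$. This immediately implies the key estimate $|\{t : L_t \geq (\log n)^4\}| \leq O(n^2/(\log n)^4)$. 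The third ingredient is Ruzsa's Corollary~\ref{cor:ruzsachowla}, giving $L_t \ll K^2 \leq n$ uniformly, so that only $O(\log n)$ dyadic scales of $L_t$ need to be considered.

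Naively intersecting the popular set (of size $\Omega(n/K)$) with $\{t : L_t \leq (\log n)^4\}$ fails when $K$ is moderately large, as the complementary ``bad'' set can be larger. My plan is therefore to dyadically decompose the popular differences according to $L_t \in [2^k, 2^{k+1})$, and pigeonhole over the $O(\log n)$ scales to locate a scale $k^*$ carrying $\gg n^2/(K\log n)$ of Roth's $|A_t|$-mass. When $2^{k^*} \leq (\log n)^4$, Lemma~\ref{prop:B_tlarge} applied to any popular $t$ at this scale directly yields the desired bound. When $2^{k^*} > (\log n)^4$, the size bound $|R_{k^*}| \leq O(n^2/2^{k^*})$ severely restricts the mass that this scale can carry, so a weighted pigeonhole should force another (smaller) scale to dominate and provide the required $t$.

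I expect the main obstacle to be the case $2^{k^*} > (\log n)^4$: a direct averaging within $R_{k^*}$ only gives $|B_t| \gg 1/(K\log n)$, losing the crucial factor of $n$ in the numerator. Overcoming this will likely require a more delicate argument combining the observation that a long AP in $A$ automatically forces $|A_t| \geq L_t - 1$ (so the ``bad'' $t$'s themselves contribute substantially to Roth's sum, creating tension with the mass constraint $\sum_t |A_t|\leq n^2$) with a careful weighted bookkeeping of $|A_t|$ against $L_t$ across the dyadic scales. Ensuring the cumulative loss remains polylogarithmic is precisely what produces the $(\log n)^4$ factor in the final bound.
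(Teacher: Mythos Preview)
Your proposal has a genuine gap in the hard case $2^{k^*}>(\log n)^4$, and the suggested fix via ``weighted bookkeeping'' does not close it. The core obstruction is quantitative: the bound $|\{t:L_t\geqslant \ell\}|\leqslant n^2/(\ell-1)$ is the only structural constraint you have on the bad set, and it is far too weak relative to the popular set of size $\Omega(n/K)$. Concretely, in the regime $K\asymp n^{1/7}$ (the target of the lemma), nothing you have written excludes the possibility that \emph{every} popular $t\in A$ has $L_t\asymp K^2\asymp n^{2/7}$; your counting bound permits up to $n^{12/7}$ such $t$'s, vastly more than the $n^{6/7}$ popular ones. In that scenario $|A_t|/L_t\asymp n^{4/7}$, which falls short of the required $n^{6/7}/(\log n)^4$ by a polynomial factor. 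The observation that $|A_t|\geqslant L_t-1$ only feeds back into the same inequality $\sum_t (L_t-1)\leqslant \sum_t|A_t|\leqslant n^2$, so it does not create any new tension.

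The paper's proof is based on a completely different mechanism that you have not anticipated. Rather than trying to locate a single good $t$ by counting, it argues by contradiction: assuming $|B_t|\leqslant L$ for all $t$ forces $r(t)\leqslant L$, where $r(t)$ is the number of maximal $t$-progressions of length $\geqslant 2$ in $A$. This implies the stability estimate $|A_{jt}|\geqslant |A_t|-ML$ for all $j\leqslant M$. Starting from a popular $t_0$ with $|A_{t_0}|\geqslant n/(2K)$ and iterating this stability under multiplication by primes $\leqslant M$, one shows that $A-A$ contains all integers $t_0\prod_{p\leqslant M}p^{\alpha_p}$ with $\sum\alpha_p\leqslant n/(4MLK)$. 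This generates roughly $(n/(M^2LK))^{\pi(M)}$ distinct elements of $A-A$, and comparing with $|A-A|\leqslant n^2$ for $M\approx(\log n)^2$ forces $L\gg n/(K(\log n)^4)$. The key idea, absent from your proposal, is that smallness of $|B_t|$ for \emph{all} $t$ simultaneously propagates the popularity of $t_0$ to an exponentially large family of multiples.
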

\begin{proof}
    Suppose that $|B_t|\leqslant  L$ for all $t\neq 0$, so our goal is to show that $L\gg n/(K(\log n)^4) $. We begin by recalling \eqref{eq:B_tL} in Lemma \ref{prop:B_tlarge} which states that, if $A=\bigsqcup P_i^{(t)}$ is the partition of $A$ into the minimal possible number of arithmetic progressions $P_i^{(t)}$ with common difference $t$, and $Q_1^{(t)},\dots,Q_{r(t)}^{(t)}$ are those $P_i^{(t)}$ of size at least two, then $|B_t|\geqslant r(t)$. Hence, our assumption implies that
    \begin{equation}\label{eq:nolarger(t)}
        r(t)\leqslant L,
    \end{equation}
    for all $t\neq 0$. Clearly, if $a\in A\cap(A+t)$, then $a,a-t\in A$ and hence $a$ lies in one of the progressions $Q_i^{(t)}$ of size at least $2$. This shows that for every $t\neq 0$ we have that
    \begin{align}\label{eq:AcapA+tsubQ_i}
        A\cap(A+t)\subset \bigcup_{i=1}^{r(t)}Q_i^{(t)}.
    \end{align}
    \begin{claim}\label{cl:largeQ(t)}
        Let $t\neq 0$, and let $M\in\mathbf{N}$ be a parameter. Then $|A\cap(A+jt)|\geqslant |A\cap(A+t)|-ML$ for every $j\in \{1,2,\dots, M\}$.
    \end{claim}
    \begin{proof}[Proof of Claim \ref{cl:largeQ(t)}]
        Consider the $r(t)$ many progressions $Q_i^{(t)}$ with common difference $t$ that are contained in $A$. For each member $x$ of a progression $Q_i^{(t)}$ which is not one of the first $M$ terms of $Q_i^{(t)}$, it is clear that $x,x-jt\in Q_i^{(t)}\subset A$ whenever $j\in [M]$. Hence, for every $j\in[M]$, each $Q_i^{(t)}$ contains at least $|Q_i^{(t)}|-M$ elements of $A\cap(A+jt)$ (note that this trivially holds for those $Q_i^{(t)}$ of size $|Q_i^{(t)}|\leqslant M$). Thus, in total, we can bound
        \begin{align*}
            |A\cap(A+jt)|&\geqslant \sum_{i=1}^{r(t)}(|Q_i^{(t)}|-M)
            = \left|\bigcup_{i=1}^{r(t)}Q_i^{(t)}\right|-Mr(t)
            \geqslant |A\cap(A+t)|-ML,
        \end{align*} using \eqref{eq:nolarger(t)} and \eqref{eq:AcapA+tsubQ_i}.
    \end{proof}
    
    \medskip
    
    By Roth's Lemma \ref{lem:roth}, we see that $\sum_{t\in A}|A\cap(A+t)|=\#\{(a,t)\in A^2:a-t\in A\}\geqslant n^2/(2K)$, so we may find a $t_0\neq 0$ such that $k:=|A\cap(A+t_0)|\geqslant n/(2K)$.
    \begin{claim}\label{cl:235}
        Let $M\in\mathbf{N}$ be a parameter. For any tuple $(\alpha_p)_p$ of nonnegative integers, indexed by the primes $p\in[M]$, which satisfy $\sum_{p\leqslant M}\alpha_p\leqslant k/(2ML)$, we have that $|A\cap(A+(\prod_{p\leqslant M}p^{\alpha_p})t_0)|\geqslant 1$.
    \end{claim}
    \begin{proof}[Proof of Claim \ref{cl:235}]
    It suffices to show that whenever $\ell$ is an arbitrary nonnegative integer and $\sum_{p\leqslant M}\alpha_p\leqslant \ell$, then \begin{align}\label{eq:235indu}
        |A\cap(A+(\prod_{p\leqslant M}p^{\alpha_p})t_0)|\geqslant k-ML\ell .
    \end{align} We prove this by induction on $\ell$. The base case where $\ell=0$ holds as $|A\cap(A+t_0)|=k$ by definition. Now suppose that \eqref{eq:235indu} holds whenever $\sum_{p\leqslant M}\alpha_p\leqslant \ell$. Then an application of Claim \ref{cl:largeQ(t)} with $t=(\prod_{p\leqslant M}p^{\alpha_p})t_0$ and $j=2$ gives the desired lower bound $$|A\cap(A+(2^{\alpha_2+1}\prod_{2<p\leqslant M}p^{\alpha_p})t_0)|\geqslant |A\cap(A+(\prod_{p\leqslant M}p^{\alpha_p})t_0)|-ML\geqslant k-ML(\ell+1) ,$$ where the final inequality uses the induction hypothesis \eqref{eq:235indu}. A completely analogous application of Claim \ref{cl:largeQ(t)} with $j=p$ instead produces the same bound if we increase any other $\alpha_p$ by $1$.
        
    \end{proof}
Claim \ref{cl:235} shows that $A\cap\big(A+(\prod_{p\leqslant M}p^{\alpha_p})t_0\big)\neq\emptyset$ whenever $\sum_{p\leqslant M}\alpha_p\leqslant k/(2ML)$, so certainly all the integers $(\prod_{p\leqslant M}p^{\alpha_p})t_0$ with $\max_p\alpha_p\leqslant k/(2M^2L)$ must lie in $A-A$. Together with the trivial bound $|A-A|\leqslant n^2$, this implies that $$n^2\geqslant |A-A|\geqslant \left(\frac{k}{2M^2L}\right)^{\pi(M)}\geqslant \left(\frac{n}{4M^2LK}\right)^{\pi(M)},$$
where $\pi(M)$ denotes the number of primes up to $M$, by using that $k=|A\cap(A+t_0)|\geqslant n/(2K)$. Rearranging shows that $$L\geqslant \frac{n^{1-2/\pi(M)}}{4M^2K},$$ and we may now choose the parameter $M\approx (\log n)^2$ (this is only slightly suboptimal) to obtain the desired result that $L\gg n/(K(\log n)^4)$.

\end{proof}

Lemma \ref{lem:largeB_tnew} shows that we may find a $t\neq 0$ such that $$|B_t|\gg \frac{n}{K(\log n)^4}\geqslant \frac{n^{1-o(1)}}{K}.$$ Combining this with Proposition \ref{prop:B_tllK4} gives $K^5\gg n^{1-o(1)}$. Hence $K\gg n^{1/5-o(1)}$, completing the proof of Theorem~\ref{th:chowlapoly}.

\bibliographystyle{plain}
\bibliography{Chowlareferences}
\bigskip

\noindent
{\sc Mathematical Institute, Andrew Wiles Building, University of Oxford, Radcliffe
Observatory Quarter, Woodstock Road, Oxford, OX2 6GG, UK.}\newline
\href{mailto:bedert.benjamin@gmail.com}{\small bedert.benjamin@gmail.com}
\end{document}